\theoremstyle{plain}
\theoremstyle{definition}
\newtheorem{Theorem}{Theorem}[section]
\newtheorem{quest}[Theorem]{Question}
\newtheorem{Lemma}[Theorem]{Lemma}
\newtheorem{Proposition}[Theorem]{Proposition}
\newtheorem{Definition}[Theorem]{Definition}
\numberwithin{equation}{section}
\DeclareMathAlphabet\mathbb{U}{msb}{m}{n}
\DeclareMathAlphabet\mathbb{U}{msb}{m}{n}
\newcommand{\R}{\mathbb R}
\newcommand{\Z}{\mathbb Z}
\newcommand{\K}{\pi_1 (K)}
\newcommand{\AB}{\{a,b\}}
\begin{document}

\providecommand{\keywords}[1]
{
  \small	
  \textbf{\textit{Keywords---}} #1
}
\title{Geodesic words in central extension of the Klein Bottle group}

\author{Oleg Chistov}
\author{Ruslan Magdiev}

\address{Laboratory of Continuous Mathematical Education (School 564 of St. Petersburg), nab. Obvodnogo kanala 143, Saint Petersburg, Russia}
\email{oleguszte@gmail.com}
\address{ St. Petersburg State University, 14th Line, 29b, Saint Petersburg, 199178 Russia}
\email{rus.magdy@mail.ru}

\begin{abstract}
    In this work, we give a complete description of the language of geodesic words for a central extension of the fundamental group of the Klein Bottle with respect to the standard two-element generating set. Besides, we prove that there are no dead ends in the group. Finally, we give a new sufficient condition for the absence of dead ends.
\end{abstract}

\keywords{Geometry, surface groups, geodesics, Cayley graph, word problem, dead-end}

\maketitle

\section{Introduction}

\noindent We study combinatorial group theory aspects of the group given by the following presentation:

$$ cK = \langle a,b \mid [aba^{-1}b,a] = [b, aba^{-1}b] = 1 \rangle $$

The research on this group is motivated by M. Shapiro's works
\cite{SHP,PSL}, where one can find the idea of studying geometries of finitely generated groups whose elements can be represented as paths on the Cayley graphs of much simpler groups. More precisely, M. Shapiro considered groups whose presentation complexes are orientable surfaces and then consider groups whose elements can be represented by equivalence classes of paths in the Cayley graph of the original groups.
The most demonstrative examples of this method of studying a group are works dedicated to the discrete Heisenberg group. In \cite{RUS, SHP, InfiniteG}
there are examples of using this construction with "drawing" elements.
A peculiarity of these works is that nobody has studied the case when presentation complex is non-orientable surface, which raises a natural question.

\begin{quest}
If we apply M. Shapiro's construction for the case of non-orientable surface as presentation complex, how drastically combinatorial and geometric properties of a group does change?
\end{quest}

To answer this question, the authors of the paper consider the simplest example of an infinite group whose presentation complex is a non-orientable surface - the Klein bottle group.
More precisely, $cK$, a group was presented earlier, is the central extension of $\pi_1(K)$ with the following presentation and short exact sequence:

$$ \pi_1 (K) = \langle a,b \mid aba^{-1}b \rangle; $$

$$ 1 \longrightarrow \langle t \rangle \overset{\alpha}\longrightarrow cK \overset{\pi}\longrightarrow \pi_1 (K) \longrightarrow 1,$$
\\
where $\alpha(t) = aba^{-1}b$ and $\pi$ is the canonical projection onto the quotient by the normal subgroup generated by $aba^{-1}b$. Below we write $\{a,b\} = aba^{-1}b$

Actually, there are two non-isomorphic groups that can be obtained as central extensions \cite{D03}, but the second one is $\pi_1 (K) \times \Z$ which is rather not interesting in terms of geometry.

\section{Main results}

The main result of this work is a new method that is similar to the one described in M.Shapiro's work \cite{SHP}.
The result consists of the following facts: there are no dead end elements in $ cK $ relative to the standard generating set, there is an explicit description of all geodesic words and their geodesic continuations in terms of "basic moves"(Section 5), and there is sufficient condition on the absence of dead ends in terms of the formal language of geodesic words. All together these facts give one an ability to study geometric aspects of $ cK $ in terms of projection on $\pi(K)$ and ideas written in Sections 3 and 4.

\vspace{3mm}

\noindent {\bf Theorem 1}.
In $cK$ there are no dead end elements. 

\vspace{3mm}

\noindent {\bf Theorem 2}.
Any geodesic representative of element $(k,m,n)$ can be obtained from a standard geodesic representative by finitely many applying of {\it basic moves}.

\vspace{3mm}

\noindent {\bf Theorem 3}.

The following statements hold:

\begin{enumerate}

    \item[1.] Any geodesic representative of $ (k, m, n) $ can be continued with $a$ and $b^{-1}$, where $ k < 0 \And | k | > m $.
    
    \item[2.] Any geodesic representative of $ (k, m, n) $ can be continued with $a$ and $b$, where $ k < 0 \And | k | \leqslant m $, $n \mod 2 = 0$.
    
    \item[3.] Any geodesic representative of $ (k, m, n) $ can be continued with $a$ and $b^{-1}$, where $ k < 0 \And | k | \leqslant m $, $n \mod 2 \neq 0$.
    
    \item[4.] Any geodesic representative of $ (k, m, n) $ can be continued with $a$ and $b$, where $ k > 0$.

\end{enumerate}

\vspace{3mm}

\noindent {\bf Theorem 4}.
Consider a finitely generated even given group $ G = \langle S \mid R \rangle $ with a fixed generating set $ S $. If $ St (G, S) $ consists of geodesic words for each element of the group and each element of the language is represented as a prefix of another element, then there are no dead ends in $ G $ with respect to $ S $.

\section{Basic definitions}

\vspace{0.3cm}

The basic definitions from geometric group theory and general terminology can be found in the following books \cite{OH, CL}, but the authors want to put emphasis on some of them below.

\vspace{0.2cm}

\subsection{Combinatorial and geometric group theory}

Let $S$ be a set of letters. By $F(S)$ we denote the free group generated by $S$. For any $w\in F(S)$ the length of the word is $ |w| $.  By the length of an element $ g \in G $ we mean the length of the shortest word representing  $ g $, and denote it by $ l(g). $

\begin{Definition}
Word $w \in F(S)$ is called geodesic if $l(w) = |w|$. I denote the set of all geodesic words in the group $G$ by $\Gamma(S,G) \subseteq F(S)$
\end{Definition}
\begin{Definition}
Element $g \in G$ is called dead end if ${\rm depth}(g)>0$ i.e $l(gs) \leq l(g)$ for all $s \in S \cup S^{-1}$. I denote the set of all dead end words $G$ by $\Delta(G) \subseteq G$. Word $w$ is called dead end if it is geodesic and defines an element: $l(\overline{w}s) \leq l(\overline{w}) = |w|$ for all $s \in S \cup S^{-1}$.
\end{Definition}

\begin{Definition}
The normal form of an element $ g \in  G $ is called an isomorphism from group $ G $ to $ \Z^n $ with a non-standard operation of multiplication of tuples for some $ n \in \Z^+ $.
\end{Definition}

\begin{Definition}
The Dehn area of word $ w \in F(S) $ that is identity in group $ G = \langle S \mid R\rangle, |S|+|R|<\infty$ is the minimum number of applications of relations, insertions, and reductions that are needed to bring a word to an empty word.
\end{Definition}

\section{Model of $cK$ as $\pi_1 (K)$}

\vspace{0.3cm}

Since there is the canonical projection of $ cK $ on $ \pi_1 (K) $, each element of the group $ \pi_1 (K) $ is an equivalence class of elements in $ cK $. More precisely, two elements $ g, h \in cK $ are equivalent, if $ gh^{- 1} \in \langle \{ a, b \} \rangle $.

Note that the canonical projection induces the projection of the Cayley graph $ cK $ onto the Cayley graph $ \K $. Then we can project polygonal lines from the Cayley graph of $ cK $ into polylines from $ \K $. It is important to mark that polygonal lines do not change in the way of notion: each polygonal line on the Cayley graph of $ cK $ is some word from elements in the alphabet of a generating set, but since the canonical projection preserves the system of generators, then each polyline from $ cK $ corresponds to a unique polygonal line on the Cayley graph of $ \K $. Then the polyline on the Cayley graph $ \K $ uniquely defines an element of the group.

\begin{figure}
\center{\includegraphics[scale=1.3]{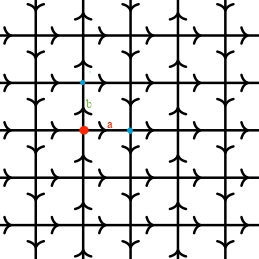}}
\caption{\footnotesize Cayley graph of group $\K$. Red dot shows 1}
\end{figure}

Up to this point, no specific characteristics of the structure of Cayley graphs were used. This means that one can consider any groups where one group is a central extension of the other group instead of $ \K $ and $ cK $ 

Now let's turn directly to the structure of the $ \K $ Cayley graph. It is a well-known fact that $ \K $ is a semidirect product of $ \Z $ with $ \Z $, which means that each element can be represented as a pair of integers. This pair will be called the normal form of the element. What will the normal form represent? Recall that the Klein bottle is topologically covered by a plane. It is not difficult to understand that the plane will be tiling of the Klein bottle with a sweep.
If one glue disks into each square as indicated in the group relations, then a Cayley complex of $ \K $ is obtained. It is an integer lattice with a special edge orientation that corresponds to the Cayley graph $ \K $ with respect to the standard generating set. It is clear that each pair of numbers $ (m, n) $ can be associated with a polyline $ b^m a^n $. Each such polygonal chain is uniquely defined by its end, therefore this word implements the normal form.
Let's see what element in $ cK $ the word $ b^m a^n $ defines. This will be an element defined by the $ \AB^0 b^m a^n $ word. Now when considering any other polyline $ w $, the exponent at $ \AB $ will be equal to the Dehn area of the broken line $ w*a^{-n}b^{-m} $. Thus, one can encode each element of the group $ cK $ with three integers $ (k, m, n) $, where $ m, n $ are the coordinates of the end of the polyline on the lattice $ \K $, and $ k $ is the Dehn area of a closed broken line obtained from the given one by assigning $ a$.
How can one geometrically see the Dehn square? Let's take a look at the geodesic representatives $ \{ a, b \} $. Their length will be equal to 4 and they will walk around the cell in a certain order. Since $ \{ a, b \} $ has Dehn's area equal to 1, then the order of traversal given by the geodesic representatives is considered "positive" and one can assume that for each cell there is definitely such an order: clockwise or counterclockwise.

\begin{figure}
\center{\includegraphics[scale=1.3]{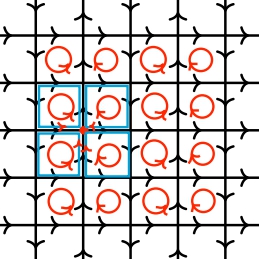}}
\caption{\footnotesize Caley graph of group $\K$ with local orientation in squares}
\end{figure}

\subsection{Normal form}
Based on the entered normal form formula one can change the derivation of the method of multiplying the normal forms of the elements of this group. Let's take elements $g$ and $h$ with normal forms $(k_1,m_1,n_1)$ and $(k_2,m_2,n_2)$ respectively. The normal form of element $gh$ will be equal: 
$$(k_1 + k_2 + m_2(n_1\mod{2}), m_1 + m_2(-1)^{n_1}, n_1 + n_2) $$

The presence of a normal form and a formula for their multiplication allows us to introduce isometries of the Cayley graph $cK$ which will narrow down the number of cases analyzed in lemmas and proofs:
$$(k,m,n) \longmapsto (k,m,-n) \qquad (k,m,n) \longmapsto (-k,-m,-n) $$
These transformations are generated by two transformation of the generating set ($a \mapsto a^{-1}$ and $a,b \mapsto a^{-1}, b^{-1}$ that preserve "being-geodesic". They allow us to further assume that we are considering only the cases $n,m\geqslant 0$ unless otherwise stated.

There is a list of several observations that are related to the model and normal form, formulated as lemmas: 

\begin{Lemma}
$a^2$ is in the center of $cK$.
\end{Lemma}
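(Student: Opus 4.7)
The plan is to verify that $a^2$ commutes with each of the generators $a$ and $b$; since $\{a,b\}$ generates $cK$, this suffices. Commuting with $a$ is immediate, so the whole task reduces to the identity $a^2b = ba^2$.

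First, I will rewrite each of the two defining relations as an equation that avoids the commutator bracket. Expanding $[b,\{a,b\}]=1$ and canceling a trailing $b$ on both sides yields
\[
baba^{-1} \;=\; aba^{-1}b. \tag{A}
\]
Similarly, expanding $[\{a,b\},a]=1$ as $aba^{-1}b\cdot a = a\cdot aba^{-1}b$ and canceling the leading $a$ on both sides gives
\[
ba^{-1}ba \;=\; aba^{-1}b. \tag{B}
\]

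Next, I will combine (A) and (B). Their right-hand sides are identical, so $baba^{-1}=ba^{-1}ba$, and canceling the leading $b$ leaves $aba^{-1}=a^{-1}ba$. Multiplying both sides by $a$ on the left and by $a$ on the right produces $a^2b=ba^2$, which is precisely commutativity of $a^2$ with $b$. Together with the trivial identity $a^2\cdot a = a\cdot a^2$, and since $\{a,b\}$ generates $cK$, this forces $a^2$ into the center.

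There is no real obstacle here: the only slightly delicate point is making sure the cancellations in steps (A) and (B) are legitimate (they are, because we are working inside the group, not inside the free monoid). As a sanity check, one can also verify the result via the normal–form multiplication formula of Section~4.1: computing $a^2=(0,0,2)$, $b=(0,1,0)$ and applying the formula in both orders gives $a^2b=(0,1,2)=ba^2$, in agreement with the algebraic derivation above.
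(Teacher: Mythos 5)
Your proof is correct and works at the same elementary level as the paper's: both reduce the claim to $a^2b=ba^2$ and derive it by direct manipulation of the two defining relations. The only difference is mechanical — the paper lists the four commutation rules $ab=\{a,b\}b^{-1}a$, $ab^{-1}=\{a,b\}^{-1}ba$, etc., and observes that pushing $b$ through $a^2$ picks up the cancelling central factors $\{a,b\}\{a,b\}^{-1}$, whereas you equate the two expressions $baba^{-1}=aba^{-1}b=ba^{-1}ba$ coming from the two relators to get $aba^{-1}=a^{-1}ba$ directly; your version is self-contained and arguably cleaner, while the paper's rules are reused later (e.g.\ in Lemma 4.2), and your normal-form check is rightly presented only as a sanity check, since the normal form itself presupposes this lemma.
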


\begin{proof}
Let's take a look at how we can commute elements using relations.

\begin{center}
    $ab = \{ a,b \} b^{-1}a$\\
$a^{-1}b = \{ a,b \} b^{-1}a^{-1}$\\
$ab^{-1} = \{ a,b \}^{-1} ba$\\
$a^{-1}b ^{-1}= \{ a,b \}^{-1} ba^{-1}$\\

\end{center}

It is easy to see that when you try to commute the $a$ or $b$ with $a^2$ normal form doesn't change. 
\end{proof}

\begin{Lemma}\label{cut}
All closed contours in the model that are rectangles with an even width have a zero area.
\end{Lemma}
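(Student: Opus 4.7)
The plan is to reduce the claim directly to the preceding lemma, namely that $a^2$ lies in the centre of $cK$. A rectangular closed contour of height $p$ and width $q$ in the model can be written, up to a choice of starting corner and direction of traversal, as a word of the form
\[
w = b^{p}\,a^{q}\,b^{-p}\,a^{-q}.
\]
Different starting corners correspond to cyclic permutations of $w$, which yield conjugate elements of $cK$ and hence the same $k$-coordinate in the normal form; changing traversal direction replaces $w$ by $w^{-1}$, which also preserves the area. So it suffices to treat this one shape.

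First I would do a short sanity check using the multiplication formula for normal forms: applying it step by step to $b^{p}\,a^{q}\,b^{-p}\,a^{-q}$ shows that the $(m,n)$-part of the result is $(0,0)$ exactly when $q$ is even. Thus ``even width'' is precisely the hypothesis that makes the rectangle genuinely close up in the model, confirming that the lemma's assumption is the natural one.

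With $q=2\ell$ even, the previous lemma gives $a^{q}=(a^{2})^{\ell}\in Z(cK)$. Hence $a^{q}$ commutes with $b^{p}$, and the word collapses:
\[
w \;=\; b^{p}\,a^{q}\,b^{-p}\,a^{-q} \;=\; a^{q}\,b^{p}\,b^{-p}\,a^{-q} \;=\; 1.
\]
So $w$ represents the identity, whose normal form is $(0,0,0)$, and in particular the area coordinate vanishes.

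There is no real obstacle here; the main content is the one-line reduction to centrality of $a^{2}$, and the only thing to be careful about is the bookkeeping that allows one to standardise every rectangular contour to the shape $b^{p}a^{q}b^{-p}a^{-q}$. That bookkeeping is immediate from the observation that the area depends only on the represented group element, not on the chosen word.
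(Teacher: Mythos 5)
Your proof is correct and rests on the same key fact as the paper's --- the centrality of $a^2$ --- but the execution differs. The paper argues geometrically: it observes that the single $1\times 2$ contour $a^2ba^{-2}b^{-1}$ has zero area and then tiles an arbitrary even-width rectangle by such blocks, an appealing picture but one that leaves the additivity of area under the decomposition implicit. You instead write the whole contour as one word $b^pa^qb^{-p}a^{-q}$ and collapse it algebraically to the identity using $a^q=(a^2)^{\ell}\in Z(cK)$, which is shorter and avoids the tiling bookkeeping entirely; your check that odd $q$ fails to close up is a useful confirmation that the evenness hypothesis is exactly what is needed. One small caveat: your justification that cyclic permutations preserve the $k$-coordinate ``because they yield conjugate elements'' is not valid for general elements of $cK$ (conjugation by $a$ sends $(k,m,n)$ to $(k+m,-m,n)$, so it can change $k$); it does hold here, but only because a closed contour represents a power of the central element $\{a,b\}$ --- or, more simply, because you have already shown $w=1$ and every conjugate of the identity is the identity. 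With that remark tightened, the argument is complete.
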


\begin{proof}
Note that by the previous lemma the contour $a^2 b a^{-2} b^{-1}$ will have a zero area. Any rectangle with an even width can be obtained as small 1x2 rectangles which are connected by some sections of polylines along the path. 
\end{proof}

\section{Auxiliary results}

In this section, the authors write out some general statements about the structure of geodesics and dead ends in groups, as well as lemmas that analyze certain classes of elements for dead ends.

\begin{Lemma} \label{EvenRelatorLemma}
Let $ G $ have relations of even length. Then $ l (gs) = l (g) \pm 1 $ for all $ s \in S \cup S ^ {- 1} $. In particular, there are no dead ends of depth 1 in $ G $.
\end{Lemma}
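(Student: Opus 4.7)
The plan is to exploit a parity argument coming from the even-length hypothesis on the relators. First I would define a parity function $\epsilon \colon G \to \Z/2\Z$ by setting $\epsilon(g) = |w| \bmod 2$ for any word $w \in F(S)$ representing $g$. The key observation, which I would verify at the outset, is that $\epsilon$ is well-defined: any two words representing the same element differ by a sequence of free insertions/cancellations of pairs $ss^{-1}$ (length change $\pm 2$) and by insertions of conjugates of relators (length change by an even number, since every relator in $R$ has even length). Hence $\epsilon$ descends to a homomorphism $G\to\Z/2\Z$, and in particular $l(g)\equiv \epsilon(g)\pmod 2$ for every $g\in G$.

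Given this, the first conclusion is essentially automatic. For any $s\in S\cup S^{-1}$, we have $\epsilon(gs)=\epsilon(g)+1\pmod 2$, so $l(gs)$ and $l(g)$ have opposite parities; in particular $l(gs)\neq l(g)$. The triangle inequality in the word metric gives $|l(gs)-l(g)|\le 1$, and combining the two forces $l(gs)=l(g)\pm 1$.

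For the statement about dead ends of depth $1$, I would argue by contradiction. Suppose $g$ were a dead end of depth $1$: then $l(gs)\le l(g)$ for all $s\in S\cup S^{-1}$, and there exist $s_1,s_2\in S\cup S^{-1}$ with $l(g s_1 s_2)>l(g)$. By the $\pm 1$ dichotomy just established, $l(gs_1)\leq l(g)$ actually forces $l(gs_1)=l(g)-1$. But then $l(gs_1 s_2)\le l(gs_1)+1=l(g)$, contradicting $l(gs_1 s_2)>l(g)$.

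There is no real obstacle here; the only point that deserves care is the verification that $\epsilon$ is well-defined, i.e.\ that the even-length hypothesis on $R$ really does make length mod $2$ into a group invariant rather than just a word invariant. Once that is secured, both assertions follow from the triangle inequality together with the fact that each generator flips parity.
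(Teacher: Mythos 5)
Your proof is correct and is essentially the paper's argument: both rest on the observation that, because all relators have even length, word-length parity is an invariant of the group element (you package this as a homomorphism $\epsilon\colon G\to \Z/2\Z$, the paper as parity-preservation along edges of the graph of words and relations), combined with the triangle inequality. You additionally spell out the ``no dead ends of depth $1$'' consequence, which the paper leaves implicit; that step is also correct.
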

\begin{proof}
From the triangle inequality we have $ l (g) - 1 \leq l (gs) \leq l (g) + 1 $, so it suffices to prove that $ l (gs) \neq l (g). $ Suppose that the equality $ l (gs) = l (g) $, and we arrive at a contradiction. Let us choose a geodesic representative $ w \in F (S) $ for $ g $.
Note that the word $ ws $ is not geodesic, because $ l (gs) = l (g) = | w | <| ws | $, but $ [ws] = gs. $ Choose a geodesic representative $ u \in F (S) $ for $ gs $. Since the words $ u $ and $ ws $ define the same element $ G $, there is a path in the graph of words and relations between them. Note that passing along the edge does not change the parity of the word length, since $ G $ is evenly given. Thus, both numbers $ | u | $ and $ | ws | $ have the same parity. However, this is not the case, since $ | u | = l (gs) = l (g) = | w | $ and $ | ws | = | w | + 1 $. The resulting contradiction completes the proof.
\end{proof}

\begin{Lemma} \label{LastLetterLemma}
Let $ G $ have relations of even length. For the geodesic word $ w \in F(S) $ and the letter $ s \in S \cup S^{-1} $, the following conditions are equivalent:
\begin {enumerate}
\item $ ws $ is a geodesic word;
\item The element $ [w] \in G $ does not have a geodesic representative ending with the letter $ s^{-1} $.
\end{enumerate}
\end{Lemma}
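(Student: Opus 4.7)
The plan is to reduce everything to the previous lemma, which already tells us that in an evenly presented group the length changes by exactly $\pm 1$ when we multiply by a generator. That rigidity is exactly what forces the two conditions to match up.

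For the direction $(1)\Rightarrow(2)$, I would argue by contrapositive. Assume $[w]$ has some geodesic representative of the form $v s^{-1}$, so $|v| = l([w]) - 1$. Since $[v] = [w]s = [ws]$, we get $l([ws]) \leq |v| = l([w])-1 < l([w])+1 = |ws|$, so $ws$ cannot be geodesic. No use of the previous lemma is needed here; this is just the triangle inequality plus the definition of a geodesic word.

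The direction $(2)\Rightarrow(1)$ is where Lemma \ref{EvenRelatorLemma} enters. Assume $ws$ is not geodesic. Then $l([ws]) < |ws| = l([w]) + 1$, and by the previous lemma the only remaining possibility is $l([ws]) = l([w]) - 1$. Choosing any geodesic representative $u$ of $[ws]$, so that $|u| = l([w]) - 1$, the word $u s^{-1}$ represents $[ws]s^{-1} = [w]$ and has length $l([w])$, hence it is a geodesic representative of $[w]$ ending in $s^{-1}$, contradicting (2).

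I do not expect any real obstacle here: the substantive content is concentrated in Lemma \ref{EvenRelatorLemma}, and once we have the dichotomy $l(gs) = l(g)\pm 1$, the equivalence is essentially a bookkeeping argument about the last letter. The one point to be careful about is that $s$ ranges over $S \cup S^{-1}$ symmetrically, so ``ending in $s^{-1}$'' is well-defined for any choice of sign; everything above goes through verbatim whether $s$ is a generator or an inverse of a generator.
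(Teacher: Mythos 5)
Your proposal is correct and follows essentially the same route as the paper: the forward direction is the same triangle-inequality argument (replace $w$ by a geodesic representative ending in $s^{-1}$ and cancel), and the reverse direction uses Lemma \ref{EvenRelatorLemma} to force $l([ws]) = l([w]) - 1$ and then appends $s^{-1}$ to a geodesic representative of $[ws]$, exactly as in the paper. No gaps.
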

\begin{proof}
Let $ ws \in \Gamma (S, G) $. Assume the opposite: let there exist a geodesic representative $ u \in [w] $, which ends with $ s^{-1} $.
Then since $ ws $ is geodesic, then we can replace the prefix $ w $ in it with $ u $. The length of the resulting word will not change, but we get that $ [ws] = [us^{-1}s] = [v] $, where $ v $ is the geodesic prefix $ w $ and $ | w | = | v | + 1 $. Moreover, $ | ws | = | w | + 1 = | v | + 2 $, which is impossible, since $ ws $ is a geodesic representative of its class, and we found a word with a shorter length in this class.

Now let the element $ [w] \in G $ have no geodesic representative ending with the letter $ s^{-1} $. Let us prove that $ ws \in \Gamma (S, G) $. Suppose the opposite. Let $ [ws] = [v] $, where $ v $ is a geodesic word, and $ |v| = |w| - 1 $ (see lemma \ref{EvenRelatorLemma}). Obviously, $ [w] = [vs^{-1}] $. Then $ l([vs^{-1}]) = l([w]) = |w| = (|w|-1)+1 =|vs^{-1}|$, that is, $ [w] $ has a geodesic representative ending in $ s^{-1} $. Contradiction with the condition. The lemma is proved.
\end{proof}

\begin{Lemma} \label{DEC}
The element $ g \in G $ is a dead end if and only if for each letter $ s \in S \cup S^{-1}$ $ g $ has a geodesic representative ending with this letter.
\end{Lemma}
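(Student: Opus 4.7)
The plan is to deduce the characterization directly from Lemma \ref{LastLetterLemma}, which gives a dictionary between ``$ws$ is geodesic'' and ``some geodesic representative of $[w]$ ends in $s^{-1}$.'' Since both conditions in the statement are symmetric under the involution $s \mapsto s^{-1}$ on $S \cup S^{-1}$, that earlier lemma essentially packages exactly what is needed; the proof splits into two short applications.

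For the ($\Rightarrow$) direction, I would fix any geodesic representative $w$ of $g$ and any $s \in S \cup S^{-1}$. The dead-end hypothesis gives $l(gs) \leq l(g) < l(g) + 1 = |ws|$, so $ws$ cannot be geodesic. Lemma \ref{LastLetterLemma} then produces a geodesic representative of $g$ ending in $s^{-1}$. As $s$ varies over $S \cup S^{-1}$, so does $s^{-1}$, yielding a geodesic representative of $g$ ending in every letter.

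For the ($\Leftarrow$) direction, I would fix any $s \in S \cup S^{-1}$ and use the hypothesis to pick a geodesic representative $w = w' s^{-1}$ of $g$ ending in the letter $s^{-1}$. Then $w'$ represents $gs$ and has length $l(g) - 1$, giving $l(gs) \leq l(g) - 1 \leq l(g)$. Since $s$ was arbitrary, $g$ is a dead end.

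The main thing to be careful about is just the bookkeeping of inverses ($s$ versus $s^{-1}$) when invoking Lemma \ref{LastLetterLemma}; no heavier machinery than the preceding two lemmas is required. Both directions implicitly inherit the ``relations of even length'' hypothesis through Lemma \ref{LastLetterLemma}, so strictly speaking this standing assumption should be carried over into the statement of the current lemma as well.
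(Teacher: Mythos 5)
Your proof is correct and follows essentially the same route as the paper: the forward direction via Lemma \ref{LastLetterLemma} (the paper phrases it as a contradiction, you as a contrapositive) and the reverse direction by truncating a geodesic representative ending in $s^{-1}$. Your closing remark is also well taken --- the lemma does implicitly rely on the even-length-relations hypothesis through Lemma \ref{LastLetterLemma}, and the statement as printed omits it.
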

\begin{proof}
Let us prove that $ g $ is a dead end. It is clear that if we consider some geodesic word $ w = vs $ and $ [w] = g $, then $ l([ws^{- 1}]) = l([v]) = |v| = |w|-1 = l([w]) - 1 $. Now, since we can find words from $ [w] $ ending with any letter of the alphabet, then $ l (gs)<l(g), \forall \, s \in S \cup S^{-1} $, which is the definition of a dead-end element.

Let $ g $ be a dead end. Suppose the opposite. Suppose we have no geodesic representatives $ g $ that end with $ s^{-1}$. Then, by Lemma \ref{LastLetterLemma}, if we assign $ s $ to any geodesic representative $ g $ on the right, then we again get a geodesic word. Since $ w $ and $ ws $ are geodesics, then $ l(gs) = |ws| = |w| + 1 = l(g) + 1 > l(g) $, which contradicts the fact that $ g $ is a dead-end element.
\end{proof}

\subsection{Dead-end elements and standard representatives} 
In this scientific work, it was proved that any element of the group $cK$ is not dead end. To prove this, several lemmas were introduced to show that elements of a certain kind are not dead ends. We also introduced the concept of a 'standard' element representative which is a kind of a typical element representative. As it turns out later, standard representatives are always geodesic. 

\noindent {\bf Definition}. Let's consider $(k,m,n)$, element from $cK$, where $m \geqslant 0, n \geqslant 0$  is a representative of the form $b^{k+m}ab^{k}a^{n-1}$ call a standard.

\noindent {\bf Definition}. The language of the standard representatives of the group $ G $ with respect to the generating set $ S $ is the following set of words:

$$St(G,S):= \{ w\in F(S) \mid w\, \text{ is a standard representative of } (k,m,n)\in G\} $$
\begin{Lemma}
All geodesic representatives of the element $ (k, 0,0) $ are cyclic shifts of the word $ w = b ^ {- k} a ^ {- 1} b ^ {- k} a $.
\end{Lemma}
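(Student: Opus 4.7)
My plan is to argue inside the model from Section 4: a word in $\{a^{\pm 1}, b^{\pm 1}\}$ is a polyline in the Cayley graph of $\pi_1(K)$, and since $(k,0,0)$ projects to the identity, every representative is a closed polyline based at the origin whose signed Dehn area equals $k$.

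First, a direct normal-form computation shows that $w = b^{-k}a^{-1}b^{-k}a$ represents $(k,0,0)$ (applying an isometry of Section 4 to fix the sign of $k$ if needed), so $l(k,0,0) \leq |w| = 2|k|+2$.

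For the matching lower bound, Lemma \ref{cut} implies that the Dehn contribution of each unit cell depends only on the parity of its $n$-coordinate, say $(-1)^n$. For a simple closed polyline $\gamma$ let $r_n$ denote the number of enclosed cells at row $n$; a row-by-row edge count gives
$$|\gamma|\geq\sum_n|r_{n-1}-r_n|+2\,|\{n:r_n>0\}|.$$
Setting $f(n)=(-1)^n r_n$, so that $|\mathrm{Area}(\gamma)|=|\sum_n f(n)|=|k|$ and $|r_{n-1}-r_n|=|f(n-1)+f(n)|$, the triangle inequality yields $\sum_n|r_{n-1}-r_n|\geq 2|k|$, hence $|\gamma|\geq 2|k|+2$. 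Non-simple closed polylines are ruled out by strong induction on word length: at any repeated vertex $\gamma=\alpha\sigma\beta$ splits into a loop $\sigma$ at the intersection point and a shorter loop $\alpha\beta$ at the origin, both representing central elements $(k_i,0,0)$ with $k_1+k_2=k$, and applying the inductive bound to each piece gives the same $2|k|+2$ lower bound overall.

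Equality in the chain above forces $|\{n:r_n>0\}|=1$ with $r_{n_0}=|k|$, so the enclosed region is a $1\times|k|$ horizontal strip adjacent to the origin. Over the $|k|+1$ horizontal positions and the two admissible rows $n_0\in\{-1,0\}$ (each with a uniquely determined traversal direction giving Dehn area $k$) we obtain $2(|k|+1)$ walks, which are precisely the cyclic shifts of $w$. The main obstacle is verifying the row-edge-count identity for simple polylines — specifically, that each enclosed row contributes exactly two vertical edges to $|\gamma|$ and that horizontal edges match symmetric differences of adjacent rows' column intervals — which takes a little care but is elementary; the remaining steps are then triangle-inequality bookkeeping plus the Section 4 isometries to reduce to a fixed sign of $k$.
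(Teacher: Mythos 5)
Your proof is correct and rests on the same key idea as the paper's own (much terser) argument: Lemma \ref{cut}, i.e.\ the alternation of cell orientations with the $a$-coordinate, forces the region enclosed by an area-$k$ loop of length $2|k|+2$ to be a single width-one column, and the boundary walks of such a column based at the origin are exactly the $2(|k|+1)$ cyclic shifts of $w$. Your write-up is substantially more rigorous than the paper's sketch — the explicit edge-count inequality $|\gamma|\geq\sum_n|r_{n-1}-r_n|+2\,|\{n:r_n>0\}|$, the triangle-inequality step via $f(n)=(-1)^n r_n$, and the induction disposing of non-simple closed walks are all points the paper passes over in silence.
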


According to \ref{cut}, it is clear that any geodesic presenter must have a minimum width because otherwise any local increase in width will either zero the locally accumulated area or leave it unchanged, but add the number of edges that need to be traversed at the beginning of the traversal of the negative area and at the end of the traversal. Hence the geodesic representatives $ (k,0,0) $ are closed contours of width 1, it is easy to see that $ w = b^{-k}a^{-1}b^{-k}a$ will be such a representative. It is also clear that with a cyclical shift of the word, we move our closed polygonal line along the contour. thereby covering all possible locations of the polygonal line relative to the origin.

This lemma implies that $ (k, 0,0) $ has no geodesic representative ending with the letter $ b ^ {- 1} $, which by \ref{DEC} means that $ (k, 0,0) $ is not a dead end and no prefixes of geodesic representatives of $ (k, 0,0) $ will be dead ends.

\begin{Lemma}
All elements of the form $ (k, m, n) $, where $ n \in \{ 0, 1 \}$ are not dead ends.
\end{Lemma}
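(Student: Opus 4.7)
The plan is to invoke Lemma \ref{DEC} in its contrapositive form: it suffices, for each $(k,m,n)$ with $n \in \{0,1\}$, to exhibit a single letter $s \in S \cup S^{-1}$ such that no geodesic representative of this element ends with $s$. By Lemma \ref{LastLetterLemma}, this is equivalent to producing a geodesic representative $w$ of the element together with a letter $s'$ for which $ws'$ remains geodesic.

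First, I would use the two generator-set symmetries $(k,m,n)\mapsto(k,m,-n)$ and $(k,m,n)\mapsto(-k,-m,-n)$ (both listed earlier as isometries of the Cayley graph of $cK$) to reduce the statement to the sub-case $m \geq 0$. The base case $m=0$, $n=0$ is precisely the preceding lemma, so we may assume either $n = 1$ with $m \geq 0$, or $n = 0$ with $m \geq 1$.

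In each of these sub-cases I would take as my candidate geodesic the standard representative $w = b^{k+m}\, a\, b^{k}\, a^{n-1}$ and, using the normal-form multiplication formula together with Lemma \ref{cut}, verify two things: that $w$ is geodesic (any shorter representative would enclose the same area $k$ with fewer edges, which the even-width cancellation principle rules out), and that $ws'$ is again geodesic for an appropriately chosen letter $s'$. The right choice of $s'$ depends on the sign of $k$: broadly, $s'=a$ when $k>0$ and $s'=b^{\pm 1}$ when $k \leq 0$, echoing the continuation rules foreshadowed in Theorem 3. The key computation is to exhibit, for the element $[ws']$, a representative of length exactly $|w|+1$, so that $l([ws']) \leq |w|+1$; combined with Lemma \ref{EvenRelatorLemma}, which forces $l([ws']) \in \{|w|-1,\,|w|+1\}$, and the geodesicity of $w$, this locks in $l([ws']) = |w|+1$ and yields the desired extension.

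The main obstacle will be the case distinction on the signs of $k$ and $k+m$: the length of the standard representative is a piecewise-linear function with a corner at $k+m=0$, and in different regions the correct continuation letter differs, so the argument splits into several largely parallel verifications. I expect the most delicate case to be $k+m = 0$, where the leading $b^{k+m}$ block of the standard representative collapses; there one has to fall back on the cyclic-shift family of geodesic representatives from the preceding lemma and append an $a$- or $b$-segment to reach the endpoint $(m,n)$ while keeping the overall word geodesic.
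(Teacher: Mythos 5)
Your overall strategy is the same as the paper's: reduce via Lemmas \ref{DEC} and \ref{LastLetterLemma} to exhibiting, for each such element, one geodesic representative $w$ and one letter $s'$ with $ws'$ still geodesic, and then certify geodesicity by counting letters against the enclosed area. (The paper does this with the word $ab^{k}a^{-1}b^{k+m}$ for $k>0$ and induction on $k+m$; your use of the standard representative $b^{k+m}ab^{k}a^{n-1}$ is an inessential variation.)

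However, the step you describe as ``the key computation'' is logically broken. You propose to exhibit a representative of $[ws']$ of length exactly $|w|+1$, conclude $l([ws'])\leq |w|+1$, and then claim that together with Lemma \ref{EvenRelatorLemma} and the geodesicity of $w$ this ``locks in'' $l([ws'])=|w|+1$. It does not: the parity constraint leaves both values $|w|-1$ and $|w|+1$ available, and an upper bound of $|w|+1$ is consistent with either. (Indeed $ws'$ itself already witnesses $l([ws'])\leq |w|+1$ for free, so this computation carries no information.) What actually has to be proved is the lower bound $l([ws'])>|w|-1$, and that is exactly where the work lies: one assumes a representative of length at most $|w|-1$ exists and derives a contradiction by showing that, after spending the letters needed to reach the endpoint $(m,n)$, too few letters remain to enclose area $k$ (using Lemma \ref{cut} and the parity of relators, as in the paper's induction step and in Lemma \ref{lol}). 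You do invoke this area-counting argument, but only to show that $w$ itself is geodesic; you must run it again for $ws'$, since that is the statement from which non-dead-endness follows. With that correction the rest of your plan (the symmetry reduction to $m\geq 0$, the case split on the sign of $k$, and the degenerate case $k+m=0$ handled via the cyclic shifts of $b^{-k}a^{-1}b^{-k}a$) goes through.
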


\begin{proof}
It's obvious for $k<0$, so let's prove it for $k>0$. Note that is sufficient enough to prove that every word $a b^k a^{-1} b^{k+m}$ is geodesic. Let's do it by induction on $k+m$. Variation with $m=0$ is obvious, so it is going to be base of induction. Let's prove if $a b^k a^{-1} b^{k+m}$ is geodesic then $a b^k a^{-1} b^{k+m+1}$ is also geodesic. If it isn't then there have to be word $\omega$ that represent same element but at least $2$ letters less. Then if length of $a b^k a^{-1} b^{k+m}$ is $2k+m+2$ then $|\omega|$ $\leqslant$ $2k+m$. Also we need at least $m$ for going from $(k,0,0)$ to $(k,m,0)$. Then we have $2k$ letters to cover area $k$ which is impossible.
\end{proof}

\begin{Lemma} \label{lol}

Any element of the form $ (k, 0, n) $ is not a dead end.
\end{Lemma}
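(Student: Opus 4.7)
The plan is to apply the criterion of Lemma \ref{DEC}: the element $(k,0,n)$ fails to be a dead end as soon as we exhibit one letter $s\in\{a^{\pm1},b^{\pm1}\}$ that ends no geodesic representative of $(k,0,n)$. By Lemma \ref{LastLetterLemma} this is equivalent to producing a single geodesic $w$ for $(k,0,n)$ together with a letter $t$ for which $wt$ stays geodesic; then $s:=t^{-1}$ does the job.

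First I would use the two isometries $(k,m,n)\mapsto(k,m,-n)$ and $(k,m,n)\mapsto(-k,-m,-n)$ from Section~4 to reduce to $k\geq 0$ and $n\geq 0$. The case $k=0$ is immediate, since $a^{n}$ is geodesic and so is $a^{n+1}$; the cases $n\in\{0,1\}$ are covered by the preceding lemma. It thus remains to treat $k\geq 1,\ n\geq 2$.

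For such $(k,0,n)$ I would take the standard representative $w := b^{k}ab^{k}a^{n-1}$ of length $2k+n$. A direct application of the normal-form multiplication formula gives $(k,0,n)\cdot a = (k,0,n+1)$, and $wa = b^{k}ab^{k}a^{n}$ is evidently the standard representative of $(k,0,n+1)$. Granting the announced fact that standard representatives are geodesic, both $w$ and $wa$ are geodesic. Lemma \ref{LastLetterLemma} then forbids any geodesic of $(k,0,n)$ from ending in $a^{-1}$, and Lemma \ref{DEC} concludes that $(k,0,n)$ is not a dead end.

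The main obstacle is the appeal to "standard representatives are geodesic", which is a forward reference. To make the argument self-contained I would prove the lower bound $l\bigl((k,0,n)\bigr)\geq 2k+n$ inside this proof: any word representing $(k,0,n)$ must contain at least $n$ letters from $\{a,a^{-1}\}$ for the net $a$-displacement, and at least $2k$ letters from $\{b,b^{-1}\}$ to accumulate Dehn area $k$ while returning to $m=0$, since by Lemma \ref{cut} any closed sub-contour of even width contributes zero area and can be pruned in any would-be shorter representative. Combined with the obvious upper bound from $w$ this pins down $l\bigl((k,0,n)\bigr)=2k+n$ and $l\bigl((k,0,n+1)\bigr)=2k+n+1$, so the geodesicity of both $w$ and $wa$ follows without invoking Section~5 at all.
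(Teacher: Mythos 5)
Your overall strategy is the paper's: take the standard representative $w=b^{k}ab^{k}a^{n-1}$, note that $wa$ is again a standard representative (of $(k,0,n+1)$), and deduce via Lemmas \ref{LastLetterLemma} and \ref{DEC} that no geodesic representative of $(k,0,n)$ ends in $a^{-1}$, hence the element is not a dead end. The symmetry reduction to $k\geq 0$, the $k=0$ and $n\in\{0,1\}$ cases, and the normal-form computation $(k,0,n)\cdot a=(k,0,n+1)$ are all fine. You diverge only in how geodesicity of $w$ is certified: the paper runs a parity argument on $b^{k}ab^{k}a$ (a shorter representative would leave an odd number of letters to sweep the area, impossible in an evenly presented group), whereas you propose the direct count $l\bigl((k,0,n)\bigr)\geq n+2k$.

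That count is true, but the step carrying all of its content --- ``at least $2k$ letters from $\{b,b^{-1}\}$ are needed to accumulate Dehn area $k$'' --- is asserted rather than proved, and the justification you offer (Lemma \ref{cut} plus pruning of even-width subcontours) does not establish it: Lemma \ref{cut} only says that certain rectangular contours have zero area, and it is not explained what is being pruned from an arbitrary hypothetical shorter representative, nor why pruning terminates in something to which a count applies. To close the gap you need the quantitative fact underlying Lemma \ref{cut}: the cell orientations depend only on the parity of the $a$-column, so by a discrete Green's theorem each vertical ($b^{\pm1}$) edge of the closed contour $wa^{-n}$ contributes $0$ or $\pm1$ to the signed area according to the parity of its column; since the path returns to $m=0$, its $2j$ vertical edges split into $j$ ascending and $j$ descending ones, so the total area is bounded by $j$, forcing $2j\geq 2k$. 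With that inserted your argument is complete, and is arguably cleaner than the paper's own parity count.
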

\begin{proof}
Consider the standard representative of the element: $ w = b^kab^ka^{n-1} $. It is clear that $ b ^ k a b ^ k $ is the prefix of the geodesic representative $ (k, 0,0) $, and it is clear that if $ b ^ k a b ^ k a $ is a geodesic word, then $ w $ is a geodesic word. Let us prove that $ b ^ k a b ^ k a $ is a geodesic representative. The length of this word is $ 2 * | k | + 2 $. Suppose there is an equivalent word of a shorter length. Then, by \ref{EvenRelatorLemma}, its length is less than $ 2 | k | $. In this word, one letter $ a $ will be used to shift the third coordinate of the normal form by one. Then $ 2 | k | -1 $ will be used to collect the negative area $ k $, which means that, according to the Dirichlet principle, we will definitely have an odd number of letters left to collect some amount of area, which is again impossible for even group assignments.

Now it is clear that the standard representatives of such elements are geodesic. Also, from each standard representative, you can go to another standard representative by multiplying the word by $ a $. This means that no such element is a dead end.

\end{proof}

\begin{Lemma}

An element of the form $ (k, m, n) $, where $ k <0 \And | k | \leqslant m $ are not dead ends.
\end{Lemma}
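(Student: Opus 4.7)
The plan is to invoke the standard representative together with the canonical projection $\pi: cK \to \pi_1(K)$. By the isometry $(k,m,n) \mapsto (k,m,-n)$ we may assume $n \geq 0$, and since the cases $n \in \{0,1\}$ are covered by the preceding lemma, we may further assume $n \geq 2$. Consider the standard representative $w = b^{k+m} a b^k a^{n-1}$; under the hypothesis $k < 0$ and $|k| \leq m$, the exponents $k+m \geq 0$, $k = -|k|$, and $n - 1 \geq 1$ are already in reduced form, so $|w| = (k+m) + 1 + |k| + (n-1) = m + n$.

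The key observation is that $\pi$ sends $(k,m,n)$ to $(m,n) \in \pi_1(K)$, and since $\pi$ preserves the generating set, $l_{cK}((k,m,n)) \geq l_{\pi_1(K)}((m,n))$. A direct computation in the Klein bottle group, using the commutation rules $ab = b^{-1}a$ and $ab^{-1} = ba$ to reduce any word to the unique normal form $b^i a^j$, shows that $l_{\pi_1(K)}((m,n)) = m+n$ for $m, n \geq 0$. Hence $l_{cK}((k,m,n)) \geq m + n = |w|$, and $w$ is a geodesic representative.

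By Lemma \ref{DEC} combined with Lemma \ref{LastLetterLemma}, to conclude that $(k,m,n)$ is not a dead end it suffices to exhibit a letter $s$ with $ws$ still geodesic; equivalently, this rules out any geodesic representative of $(k,m,n)$ ending in $s^{-1}$. I would split into two cases on the parity of $n$. If $n$ is even, the multiplication formula gives $(k,m,n) \cdot b = (k, m+1, n)$, and the projection bound yields $l_{cK}((k, m+1, n)) \geq m + n + 1 = |wb|$, so $wb$ is geodesic. If $n$ is odd, then $(k,m,n) \cdot b^{-1} = (k-1, m+1, n)$ with projected length $m + n + 1 = |wb^{-1}|$, so $wb^{-1}$ is geodesic. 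In either parity, $(k,m,n)$ admits a geodesic extension, and the cited lemmas yield the result.

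The main technical step is the lower bound $l_{\pi_1(K)}((m,n)) = m + n$, which is purely a fact about the Klein bottle group and follows from its $\mathbb{Z} \rtimes \mathbb{Z}$ structure; once this is in hand, everything else reduces to arithmetic with the multiplication formula. The principal obstacle is therefore a clean invocation of the projection-to-$\pi_1(K)$ lower bound, together with confirming that the parity case split coincides exactly with the later dichotomy of Theorem 3, cases (2) and (3).
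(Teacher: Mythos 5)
Your proof is correct and follows essentially the same route as the paper: the standard representative $b^{k+m}ab^{k}a^{n-1}$ has length $m+n$, which matches the lower bound coming from the projection onto the integer lattice of $\pi_1(K)$ (the paper phrases this as the minimal length of a polyline joining opposite corners of the $n\times m$ rectangle), so it is geodesic. The only difference is that you extend by $b$ or $b^{-1}$ according to the parity of $n$, re-using the same projection bound, whereas the paper extends by $a$ to reach the standard representative of $(k,m,n+1)$; both extensions are valid for the same reason.
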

\begin{proof} Consider a standard representative: $ w = b ^ {k + m} ab ^ {k} a ^ {n-1} $. It is not difficult to see that in this case, the word will be a monotone broken line lying in the rectangle $ n $ x $ m $. Let's calculate its length: $ | w | = m + k + 1 -k + n-1 = m + n $, which is the minimum length of the polygonal line connecting opposite diagonal points of the rectangle with the lower-left vertex in $ (0,0) $. This means that such standard representatives are geodesic, and also from each standard representative, you can go to another standard representative by multiplying the word by $ a $. This means that no such element is a dead end.

\end{proof}

\begin{Lemma}

Any element of the form $ (k, m, n) $, where $ k <0 \And | k | \geqslant m $ is not a dead-end.
\end{Lemma}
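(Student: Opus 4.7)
The plan is to verify that the standard representative $w = b^{k+m} a b^k a^{n-1}$ is a geodesic of length $2|k|-m+n$, that the extension $wa$ is geodesic as well (being the standard representative of $(k,m,n+1)$), and then to conclude by Lemmas~\ref{LastLetterLemma} and \ref{DEC}. The cases $n \in \{0,1\}$ are already handled by the previous lemma, so from now on I may assume $n \geq 2$. Under our hypotheses $k<0$ and $0 \leq m \leq |k|$ the exponent $k+m \leq 0$, so $|w| = (|k|-m) + 1 + |k| + (n-1) = 2|k|-m+n$.

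The main step is to prove the lower bound $l((k,m,n)) \geq 2|k| - m + n$. Take any word $u$ representing $(k,m,n)$ and regard it as a path in the $\Z^2$ Cayley graph of $\pi_1(K)$ from $(0,0)$ to $(m,n)$. Split its letters into $H$ horizontal moves and vertical moves, and further split the latter by the parity of the $a$-coordinate at the moment of traversal into counts $V_e$ and $V_o$. Projecting to $\pi_1(K)$ gives $H \geq n$. For the vertical counts, the key observation is Lemma~\ref{cut}: because adjacent columns carry opposite local Dehn orientations, the area of the closed contour $u \cdot a^{-n} b^{-m}$ collapses to a signed contribution from odd columns only, so the magnitude of the net vertical displacement of $u$ across odd columns equals exactly $|k|$, giving $V_o \geq |k|$. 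Since the total net vertical displacement is $m$, the net displacement in even columns is $m - D_o$ with $|D_o|=|k|$, and under $|k|\geq m$ its magnitude is at least $|k|-m$, so $V_e \geq |k|-m$. Summing, $|u| = H + V_e + V_o \geq n + (|k|-m) + |k| = 2|k| - m + n$, with $w$ attaining equality.

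By inspection $wa = b^{k+m} a b^k a^n$ is the standard representative of $(k,m,n+1)$, which satisfies the same hypotheses; the previous argument then gives $l((k,m,n+1)) = 2|k| - m + n + 1 = |wa|$, so $wa$ is also geodesic. By Lemma~\ref{LastLetterLemma}, $(k,m,n)$ has no geodesic representative ending in $a^{-1}$, and by Lemma~\ref{DEC} it is therefore not a dead end. The only nontrivial step is verifying the area-to-odd-column-displacement identity in the second paragraph: one must carefully decompose the Dehn area of $u \cdot a^{-n} b^{-m}$ column by column and check, using Lemma~\ref{cut}, that contributions from even columns (including the closing segment $b^{-m}$ executed at $x=0$) cancel, while contributions from odd columns sum to $\pm k$.
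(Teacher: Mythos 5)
Your proof is correct and follows the same overall strategy as the paper's --- exhibit the standard representative $w=b^{k+m}ab^{k}a^{n-1}$, show that $w$ and $wa$ are both geodesic, and conclude that multiplying by $a$ increases length --- but you justify geodesicity by a genuinely different and more complete argument. The paper merely asserts that the prefixes of $w$ are geodesic and that ``the further proof works as in Lemma~\ref{lol}'', i.e.\ it reduces to the counting argument for $b^kab^ka$ and leaves the general case implicit; you instead prove the sharp lower bound $l((k,m,n))\geqslant 2|k|-m+n$ for an arbitrary representative by splitting its letters according to column parity. This buys you a self-contained proof, the exact length formula, and a transparent role for the hypothesis $|k|\geqslant m$ (it is exactly what forces $V_e\geqslant |k|-m$). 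The one step you flag as needing careful verification --- that the area equals the signed vertical displacement accumulated in odd columns --- is in fact immediate from the multiplication formula for normal forms in Section~4: appending $b^{\pm1}$ at $a$-coordinate $n'$ changes $k$ by $\pm(n'\bmod 2)$ and changes $m$ by $\pm(-1)^{n'}$, which is precisely the bookkeeping behind your bounds $V_o\geqslant|k|$ and $V_e\geqslant|k|-m$; no appeal to Lemma~\ref{cut} is needed. Finally, the detour through Lemmas~\ref{LastLetterLemma} and~\ref{DEC} is harmless but unnecessary: once $w$ and $wa$ are both geodesic, $l(\tau a)=l(\tau)+1$ and $\tau$ fails the definition of a dead end directly.
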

\begin{proof}
We fix an element $ \tau \in cK $ with a normal form $ (k, m, n) $ satisfying the condition. Consider a standard representative for an element. Let us prove that each of its prefixes is a geodesic word. It is clear that as long as the prefixes coincide with the prefixes of the geodesic representatives $ (k, 0,0) $, the geodesicity is preserved. Now let us prove that $ b ^ {k + m} ab ^ {k} a $ is geodesic. We proved this in \ref{lol}. Then the further proof works as in \ref{lol}.
\end{proof}

The following result actually generalizes the construction with standard representatives to an arbitrary finitely generated group, finding a sufficient condition for the absence of dead ends in a group with respect to a fixed generating set.

\begin{Lemma}
Any element of the form $ (k, m, n) $, where $ k> 0 \And m> 0 $ is not a dead end.
\end{Lemma}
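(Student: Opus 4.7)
The plan is to show that the standard representative of $(k,m,n)$ is geodesic and that appending $a$ to it yields another geodesic (namely, the standard representative of $(k,m,n+1)$); then Lemma~\ref{DEC} forces $(k,m,n)$ not to be a dead end. By the isometries of Section~4, which preserve word length, I may assume $n \geq 0$, and the sub-case $n \in \{0,1\}$ has already been treated in the earlier lemma on that range. So I fix $k, m \geq 1$ and $n \geq 2$. The standard representative is $w = b^{k+m} a b^k a^{n-1}$, of length $2k+m+n$, and $wa = b^{k+m} a b^k a^n$ is precisely the standard representative of $(k,m,n+1)$, of length $2k+m+n+1$.

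The heart of the argument will be the lower bound $|\omega| \geq 2k+m+n$ for every word $\omega$ representing $(k,m,n)$. Let $N_a, N_{a^{-1}}, N_b, N_{b^{-1}}$ denote the letter counts in $\omega$. Reading off the third coordinate inductively from the multiplication formula gives $N_a - N_{a^{-1}} = n$, hence $N_a + N_{a^{-1}} \geq n$. For the $b$-letters I would classify each occurrence of $b^{\pm 1}$ by the parity of the third coordinate of the prefix immediately preceding it: let $b_e, b_e'$ count the $b, b^{-1}$ occurring at even-parity prefixes and $b_o, b_o'$ count them at odd-parity prefixes. The multiplication formula shows that only odd-parity $b^{\pm 1}$'s contribute to the first coordinate, giving $b_o - b_o' = k$, while the second coordinate equation reads $(b_e - b_e') - (b_o - b_o') = m$, so $b_e - b_e' = m+k$. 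Applying $b_o + b_o' \geq |b_o - b_o'| = k$ and $b_e + b_e' \geq |b_e - b_e'| = m+k$ separately in the two parity classes yields $N_b + N_{b^{-1}} \geq 2k+m$; summing with the $a$-estimate gives $|\omega| \geq 2k+m+n$.

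Applied to $(k,m,n)$, this lower bound forces $w$ to be geodesic, since its length matches the bound; applied to $(k,m,n+1)$ it forces $wa$ to be geodesic of length $|w|+1$. Hence the geodesic $w$ admits a strictly longer geodesic extension by $a$, and by Lemma~\ref{DEC} the element $(k,m,n)$ is not a dead end.

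The main obstacle is the bookkeeping in the lower bound step. The multiplication formula couples the first two coordinates through the running parity of the third, so one cannot treat $b$-letters uniformly and must stratify them by this parity before extracting any useful estimates. Once the stratification is in place, the remaining inequalities reduce to the triangle inequality applied inside each parity class.
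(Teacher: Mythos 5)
Your proposal is correct, and its overall architecture is the same as the paper's: exhibit the standard representative $w=b^{k+m}ab^{k}a^{n-1}$, show it is geodesic, observe that $wa$ is the standard representative of $(k,m,n+1)$ and is again geodesic, and conclude $l(ga)=l(g)+1$ so $g$ is not a dead end. Where you genuinely diverge is in how geodesicity is certified. The paper simply defers the key step to Lemma~\ref{lol}, whose own justification is an informal area-counting argument (``$2|k|$ letters to cover area $k$'' plus a parity appeal). You instead derive exact linear invariants directly from the normal-form multiplication: $N_a-N_{a^{-1}}=n$, and, after stratifying the $b^{\pm1}$ occurrences by the parity of the third coordinate of the preceding prefix, $b_o-b_o'=k$ and $b_e-b_e'=m+k$; the triangle inequality in each stratum then yields $|\omega|\geq 2k+m+n$, which is exactly the length of the standard representative. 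I checked the stratification against the paper's multiplication formula and it is right: $a^{\pm1}$ never changes the first two coordinates, even-parity $b^{\pm1}$ changes only $m$, and odd-parity $b^{\pm1}$ changes $k$ by $\pm1$ and $m$ by $\mp1$. Your route is more self-contained and rigorous than the paper's, and it buys an explicit length formula $l(k,m,n)=2k+m+n$ for $k,m>0$, $n\geq 2$, which the paper never states. Two minor points you handle correctly but should keep explicit: the reduction to $n\geq 0$ via the isometry $(k,m,n)\mapsto(k,m,-n)$ preserves the hypotheses $k,m>0$, and the leftover cases $n\in\{0,1\}$ (where the standard representative's $a$-count no longer meets the bound with equality) are indeed disposed of by the earlier lemma you cite.
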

\begin{proof}
Consider the standard element representative $ w = b^{k+m}ab^{s}a^{n-1} $. It is clear that $ b^kab^s $ is the prefix of the geodesic representative $ (k,0,0) $, then it is geodesic. Now I will prove that we can multiply by $ a $ and the word still will be geodesic. Let's prove that $ b^{m + k}ab^ka$ is a geodesic. We proved this in \ref{lol}. Then it is clear that all the standard representatives of these elements are geodesic. Then these elements are not dead ends.

\end{proof}

\begin{Proposition}

Every element of $St(cK, \{a,b\})$ is geodesic

\end{Proposition}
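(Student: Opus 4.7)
The plan is to observe that this proposition is essentially a compilation of the per-case lemmas already established in this section, so the argument proceeds by a case analysis on the normal form $(k,m,n)$. First I would invoke the symmetries $(k,m,n)\mapsto(k,m,-n)$ and $(k,m,n)\mapsto(-k,-m,-n)$ recorded earlier, which preserve being a geodesic and allow me to assume $m\geqslant 0$ and $n\geqslant 0$; this is precisely the regime in which the standard representative $b^{k+m}ab^{k}a^{n-1}$ is defined.

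Next I would split into cases on the sign of $k$ and the comparison between $|k|$ and $m$, in each case verifying that the standard representative matches the word treated in a previously proved lemma. When $k=0$, the standard representative reduces to a monotone staircase $b^{m}ab^{0}a^{n-1}=b^{m}a^{n}$ in the lattice, whose length $m+n$ is obviously minimal. When $k<0$ and $|k|\leqslant m$, it is the monotone path of length $m+n$ lying in the $m\times n$ rectangle, and geodesicity is immediate by length-comparison as in the corresponding lemma. When $k<0$ and $|k|>m$, geodesicity follows from the lemma dealing with that subcase, whose proof reduces via Lemma~\ref{cut} to the $(k,0,0)$ analysis. When $k>0$ and $m=0$, this is exactly Lemma~\ref{lol}. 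Finally, when $k>0$ and $m>0$, this is the content of the lemma immediately preceding the proposition.

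To finish the proof I would just assemble these cases into a single statement, checking that the prefixes $b^{k+m}ab^{k}$ and $b^{k+m}ab^{k}a$ of the standard representative are geodesic in every regime (which is guaranteed by the cited lemmas together with the structure of geodesic representatives of $(k,0,0)$), and that appending the remaining $a^{n-2}$ cannot decrease the length because each multiplication by $a$ increments $n$ without changing the $L^{1}$ cost needed to enclose area $k$.

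The main obstacle, such as it is, is not in any individual case but in keeping the bookkeeping consistent: one must be careful that $b^{k+m}$ and $a^{n-1}$ are interpreted correctly when their exponents are negative or zero (for example when $n=0$ the trailing factor is $a^{-1}$ rather than empty), and that in each regime the length formula for the standard representative really matches the lower bound extracted from the parity argument of Lemma~\ref{EvenRelatorLemma} together with the area-cost obstruction used in Lemma~\ref{lol}. Once this is checked uniformly, the proposition follows from the case analysis with no new ideas required.
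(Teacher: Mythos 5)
Your proposal is correct and follows essentially the same route as the paper: the paper's own proof is the one-line remark that the claim ``was proved in lemmas recursively,'' and your case analysis on the sign of $k$ and the comparison of $|k|$ with $m$ is precisely the assembly of those preceding lemmas that this remark is gesturing at, just written out explicitly (and with more care about the boundary cases of the exponents than the paper itself bothers with).
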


\begin{proof}

It was proved in lemmas recursively.

\end{proof}

\begin{Theorem}\label{kek}
Consider a finitely generated even given group $ G = \langle S \mid R \rangle $ with a fixed generating set $ S $. If $ St (G, S) $ consists of geodesic words for each element of the group and each element of the language is represented as a prefix of another element, then there are no dead ends in $ G $ with respect to $ S $.
\end{Theorem}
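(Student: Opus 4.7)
The statement is essentially immediate once the two hypotheses are unpacked, and the work is to see that together they encode exactly what is needed. I would first recall the criterion from Lemma~\ref{DEC}: to show $g \in G$ is not a dead end it is enough to exhibit a single letter $s \in S \cup S^{-1}$ for which $l(gs) > l(g)$. Equivalently, by Lemma~\ref{LastLetterLemma}, it suffices to produce a geodesic representative $w$ of $g$ and a letter $s$ such that $ws$ is still geodesic.

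Second, I would fix an arbitrary $g \in G$ and apply the first hypothesis to obtain a standard representative $w \in St(G,S)$ with $[w] = g$; by assumption $w$ is geodesic, so $|w| = l(g)$. The prefix hypothesis then yields a word $w' \in St(G,S)$, different from $w$, of which $w$ is a prefix. Since two elements of the language that are prefixes of one another and of the same length must coincide, $w$ is a \emph{proper} prefix, and we may write $w' = w s_1 s_2 \cdots s_k$ with $k \geq 1$. By the first hypothesis again, $w'$ itself is geodesic.

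Finally, I would invoke the elementary fact that every prefix of a geodesic word is geodesic: if $w s_1$ were not geodesic, one could replace it by a strictly shorter word, contradicting the geodesicity of $w'$. Hence $w s_1$ is a geodesic word of length $l(g) + 1$ representing $g s_1$, so $l(g s_1) = l(g) + 1 > l(g)$, and $g$ is not a dead end. The only real obstacle is conceptual rather than technical: one has to notice that the first hypothesis guarantees that every element actually admits a geodesic standard representative, while the prefix hypothesis guarantees that this representative is never a ``ceiling'' in the language — once both are in hand, no combinatorial analysis of $G$ itself is required.
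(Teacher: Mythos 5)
Your proposal is correct and follows the same route as the paper: the paper's own proof is a one-sentence assertion that the conclusion is clear from the two hypotheses, and you have simply supplied the details (geodesic standard representative, proper prefix of a longer geodesic word in the language, prefixes of geodesics are geodesic, hence $l(gs_1)=l(g)+1$). No divergence to report.
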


\begin{proof}
It is clear that if each element of the group has a geodesic representative and at the same time it is a prefix of another standard geodesic representative, then the original element is no longer a dead end.
\end{proof}

\subsection{Geodesic continuity and basic moves}

The following classification theorem is introduced to understand how we can continue geodesics:

\begin{Theorem}

The following statements hold:

\begin{enumerate}

    \item[1.] Any geodesic representative of $ (k, m, n) $ can be continued with $a$ and $b^{-1}$, where $ k < 0 \And | k | > m $.
    
    \item[2.] Any geodesic representative of $ (k, m, n) $ can be continued with $a$ and $b$, where $ k < 0 \And | k | \leqslant m $, $n \mod 2 = 0$.
    
    \item[3.] Any geodesic representative of $ (k, m, n) $ can be continued with $a$ and $b^{-1}$, where $ k < 0 \And | k | \leqslant m $, $n \mod 2 \neq 0$.
    
    \item[4.] Any geodesic representative of $ (k, m, n) $ can be continued with $a$ and $b$, where $ k > 0$.

\end{enumerate}
\end{Theorem}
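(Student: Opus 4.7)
The plan is to invoke Lemma \ref{LastLetterLemma}, which reduces ``any geodesic representative of $(k,m,n)$ can be continued with $s$'' to the statement that $(k,m,n)$ has no geodesic representative ending in $s^{-1}$; combined with Lemma \ref{EvenRelatorLemma} this is equivalent to the length identity $l\bigl((k,m,n)\cdot s\bigr)=l(k,m,n)+1$. Each of the four assertions therefore becomes a check of this identity for the two specified letters $s$. The length function itself can be read off the standard representatives, whose geodesicity was established by the preceding lemmas and by the Proposition. After normalising $m,n\ge 0$ via the isometries from Section~4.1, the standard representative $b^{k+m}ab^{k}a^{n-1}$ yields, for $n\ge 1$,
\[
 l(k,m,n)=
 \begin{cases}
  2k+m+n, & k>0,\\
  m+n, & k\le 0 \text{ and } |k|\le m,\\
  2|k|-m+n, & k<0 \text{ and } |k|>m.
 \end{cases}
\]

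The normal-form product rule from Section~4.1 gives $(k,m,n)\cdot a=(k,m,n+1)$ and $(k,m,n)\cdot b^{\pm 1}=(k\pm(n\bmod 2),\,m\pm(-1)^n,\,n)$, so in each of the four cases the claimed continuations are exactly the letters for which the image lies in the same branch of the length formula and that formula increases by $1$. In case~4 ($k>0$), both $a$ and $b$ remain in the $k>0$ branch; for $b$, the parity of $n$ selects between the sub-results $(k,m+1,n)$ and $(k+1,m-1,n)$, both of length $2k+m+n+1$. In case~1 ($k<0$, $|k|>m$), $a$ and $b^{-1}$ preserve the relation $|k|>m$ and raise $2|k|-m+n$ by $1$ after a similar parity split. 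In cases~2 and~3 the parity of $n$ is precisely what determines which of $b,b^{-1}$ keeps the product in the $|k|\le m$ branch with length $m+n+1$; the letter $a$ works in both sub-cases since it does not disturb the first two coordinates.

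The hardest part will be the corner book-keeping. At $n=0$ the standard representative ends in a cancellable factor $a^{-1}$, inflating its length by $2$ in the $k>0$ regime, so either the statement must be read with $n\ge 1$ or the isometry $(k,m,n)\mapsto(-k,-m,-n)$ has to be applied to reach a regime where the formula above applies unchanged. An analogous subtlety arises at $m=0$ when continuing with $b^{-1}$ pushes $m$ negative, and at the seam $|k|=m$ where multiplication shifts the element between branches. In each of these corner configurations the continuation is verified by hand: one exhibits the standard representative of $(k,m,n)\cdot s$, recognises it as the concatenation of a geodesic for $(k,m,n)$ with the letter $s$, and then invokes Theorem~2 (basic-move transitions between geodesic representatives) to transfer the continuation from the particular standard representative to an arbitrary geodesic representative of $(k,m,n)$.
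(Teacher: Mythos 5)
Your proposal is correct and follows exactly the route the paper indicates: reduce geodesic continuability to the length identity $l(gs)=l(g)+1$ via Lemmas \ref{EvenRelatorLemma} and \ref{LastLetterLemma}, then verify that identity case by case using the length formula read off the standard representatives. In fact the paper's own proof is only a one-sentence sketch of this strategy, so your version (including the explicit length formula, the normal-form computation of $g\cdot s$, and the flagged corner cases at $n=0$, $m=0$, and $|k|=m$) supplies substantially more detail than the original.
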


\begin{center}

\begin{figure}
\center{\includegraphics[scale = 0.2]{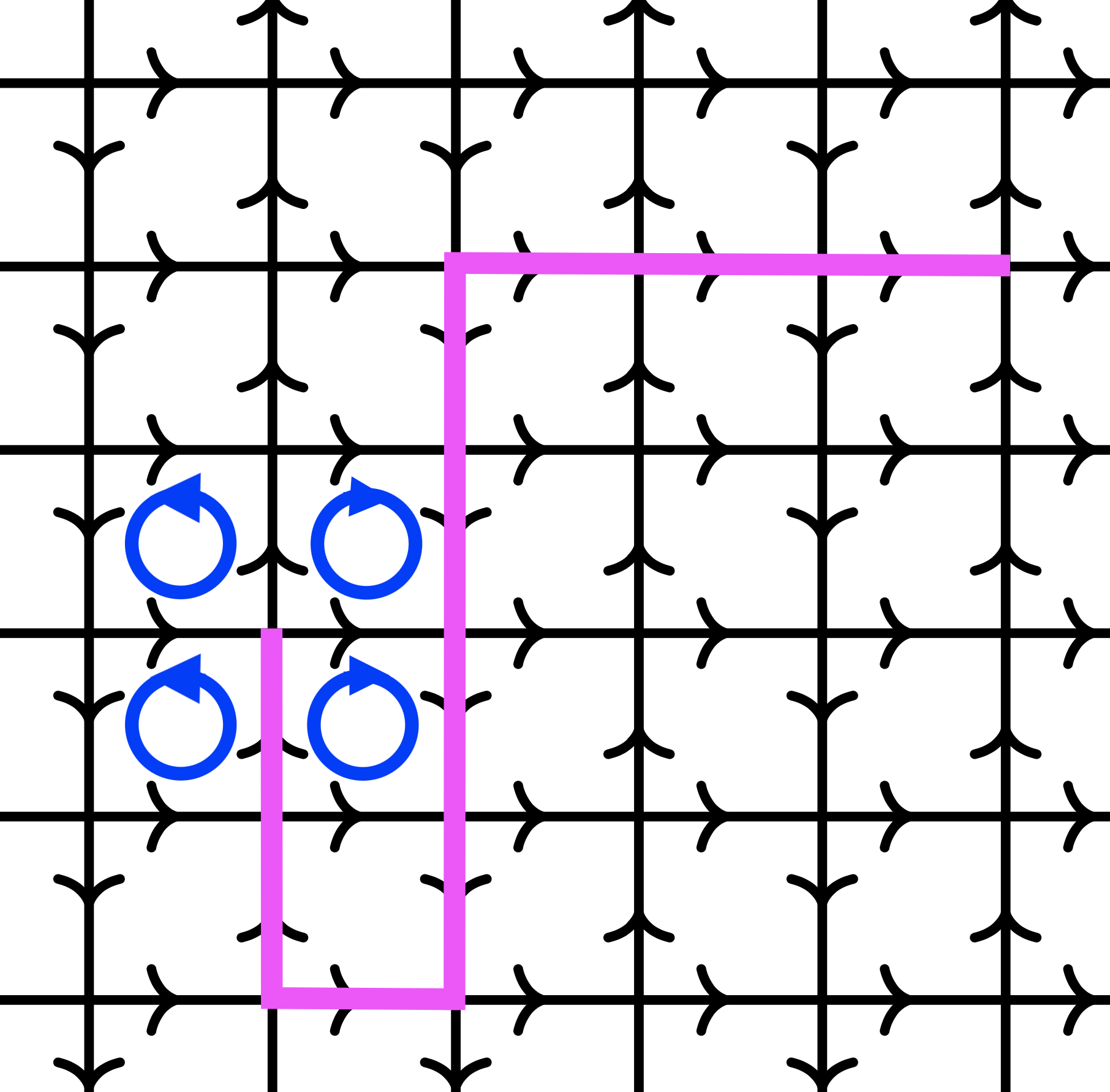}}
\caption{\footnotesize Standard representative for $(-4,2,4)$ }
\end{figure}

\begin{figure}
\center{\includegraphics[scale = 0.2]{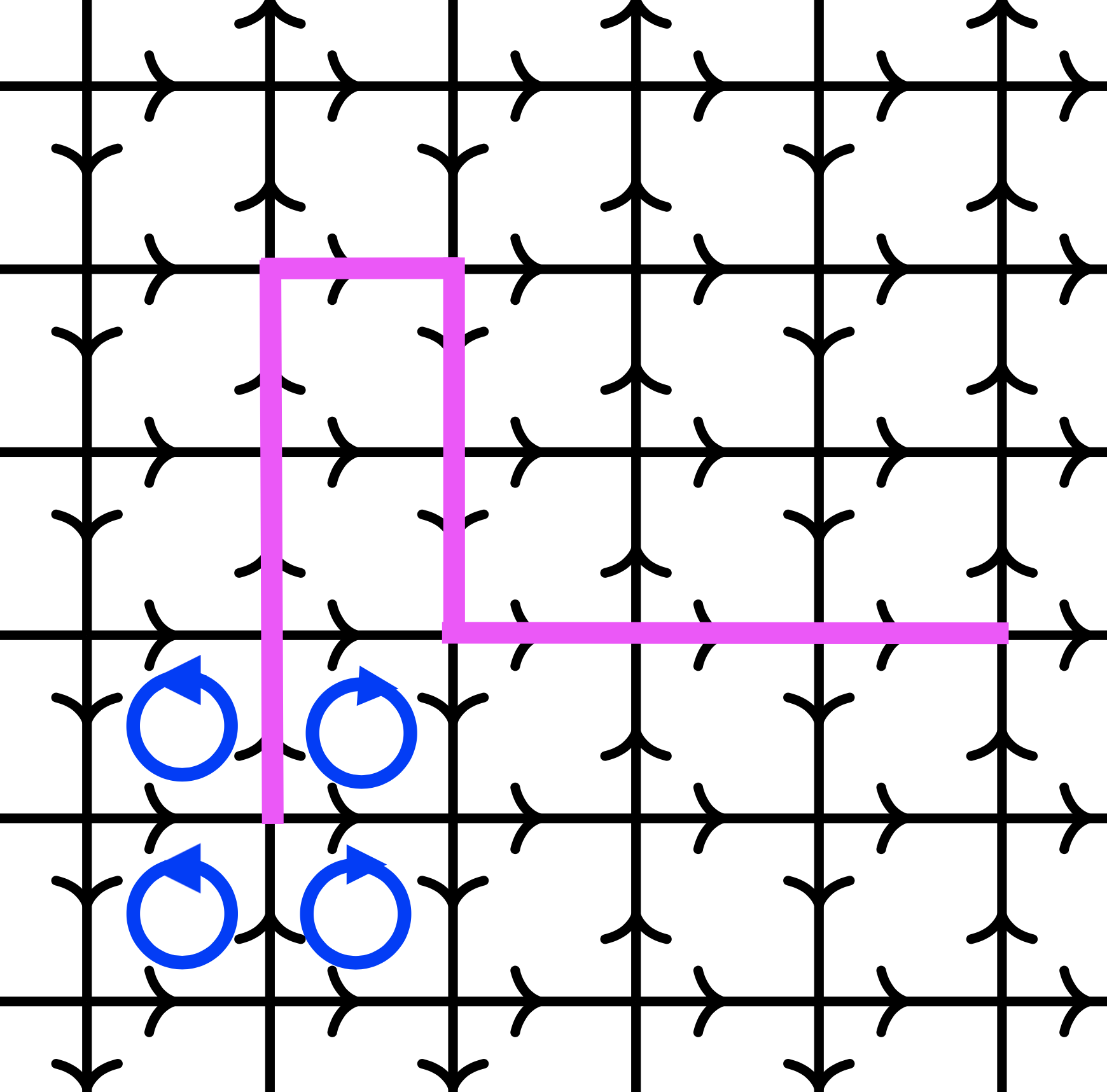}}
\caption{\footnotesize Standard representative for $(2,1,4)$ }
\end{figure}

\end{center}

\begin{proof}
The proof is based on the estimation of length with standard representatives, using technical lemmas in the section dedicated to combinatorial and geometric group theory.

\end{proof}

The concept of standard geodesic representative provides us with the length formula of an element. Also, it helps to describe other geodesic representatives, but to do it I introduce three basic moves:

\begin{Definition}\label{kok}
Let $w\in F(S)$ be geodesic representative of a given element. Basic moves are one of the following transformations:

\begin{enumerate}
    \item[1. ] Even castling. If there is subword $b^{\pm}a^2$ or $a^2b^{\pm}$. Then one can change it with $a^2 b^{\mp}$ or $b^{\mp}a^2$
    
    \item[2. ] Detowering. If there is subword $w_0 = b^{\pm x} a b^{\mp x}$  where $x\leqslant0$ and subword $w_1 = ...a...$, then you can change $w_0$  to  $b^{\pm x \mp 1 }a b^{\pm x \mp 1}$ and transform $w_1$ with $...b^{\pm}ab^{\pm}...$ at the same moment.
    
    \item[3. ] Clipping. Let $w$ represent element $(k,m,n)$.  Let $ABCD$ be rectangle such that: \\
    \begin{itemize}
        \item  $ k < 0 \And m >0 $ then $  A(n,k+m)$, $B(0,k+m)$, $C(0,-k)$ and $D(n,-k)$. \\ 
        \item $ k < 0 \And m < 0 $ then $ A(n,m-k)$, $B(0,m-k)$, $C(0,m+k)$ and $D(n,m+k)$. \\
        \item $ k > 0 \And m >0 $ then $ A(n,k+m)$, $B(0,k+m)$, $C(0,-k)$ and $D(n,-k)$. \\ 
       \item $ k > 0 \And m < 0 $ then $ A(n,m)$, $B(0,m)$, $C(0,k+m)$ and $D(n,k+m)$.\\  
       \item $ k < m  $ then $ A(n,0)$, $B(0,0)$, $C(0,m)$ and $D(n,m)$.\\  
    \end{itemize}
  




Then the polyline cuts the square into two ordered set of Young Diagrams (such that none of the Young Diagrams are sub-diagram of another one). If we can add or delete two squares of degree 2 in Young Diagrams such that they're still Young Diagrams and the squares in the odd distance (difference in first coordinates of left down corners), then we apply such change of subwords ($ab\mapsto ba$, for example), to obtain word $w'$ whose complement in $ABCD$ will be it's changed ordered set of Young Diagrams. Clipping is transformation from $w$ to $w'$.
   \\

\begin{figure}[h]
\begin{minipage}{.40\textwidth}
  \centering
  \includegraphics[scale = 0.6]{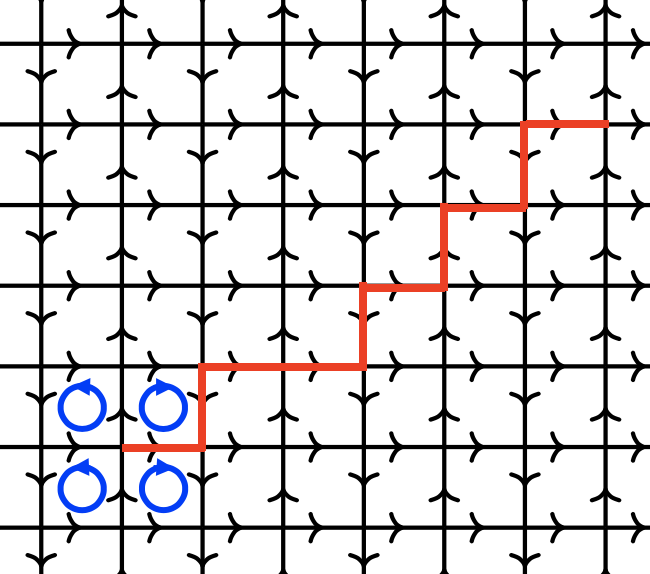}
  \end{minipage}
  \begin{minipage}{.05\textwidth}
  \centering
  \includegraphics[scale = 0.2]{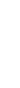}
\end{minipage}
  \begin{minipage}{.40\textwidth}
  \centering
  \includegraphics[scale = 0.6]{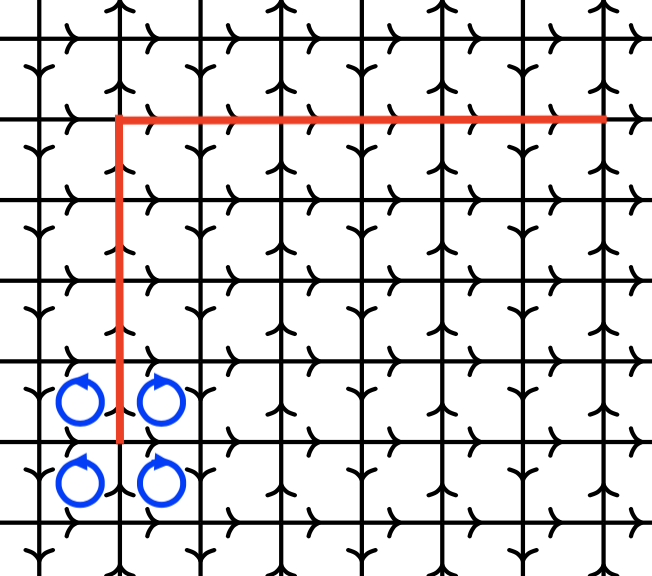}
  \end{minipage}
 \end{figure}

\begin{figure}[h]
\begin{minipage}{.20\textwidth}
  \centering
  \includegraphics[scale = 0.255]{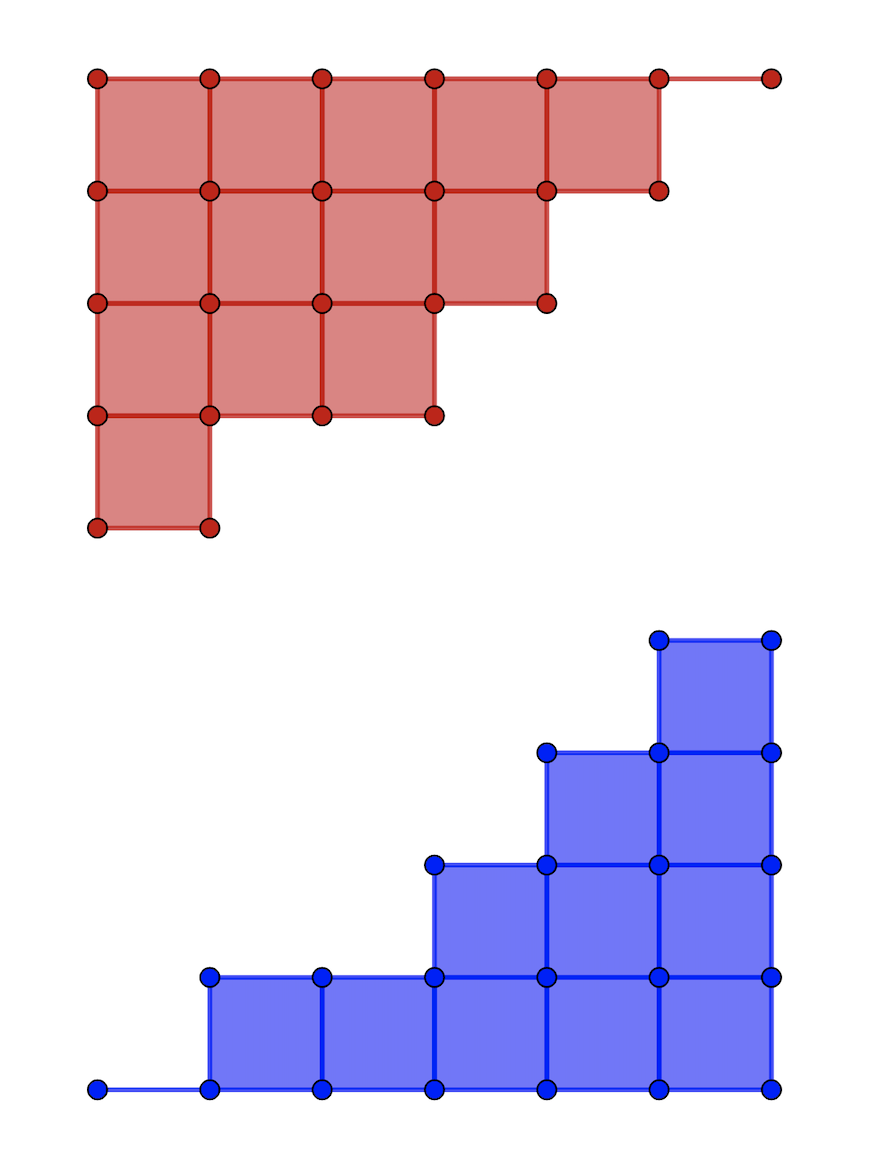}
\end{minipage}
\begin{minipage}{.05\textwidth}
  \centering
  \includegraphics[scale = 0.2]{Screenshot2.png}
\end{minipage}
\begin{minipage}{.20\textwidth}
  \centering
  \includegraphics[scale = 0.2]{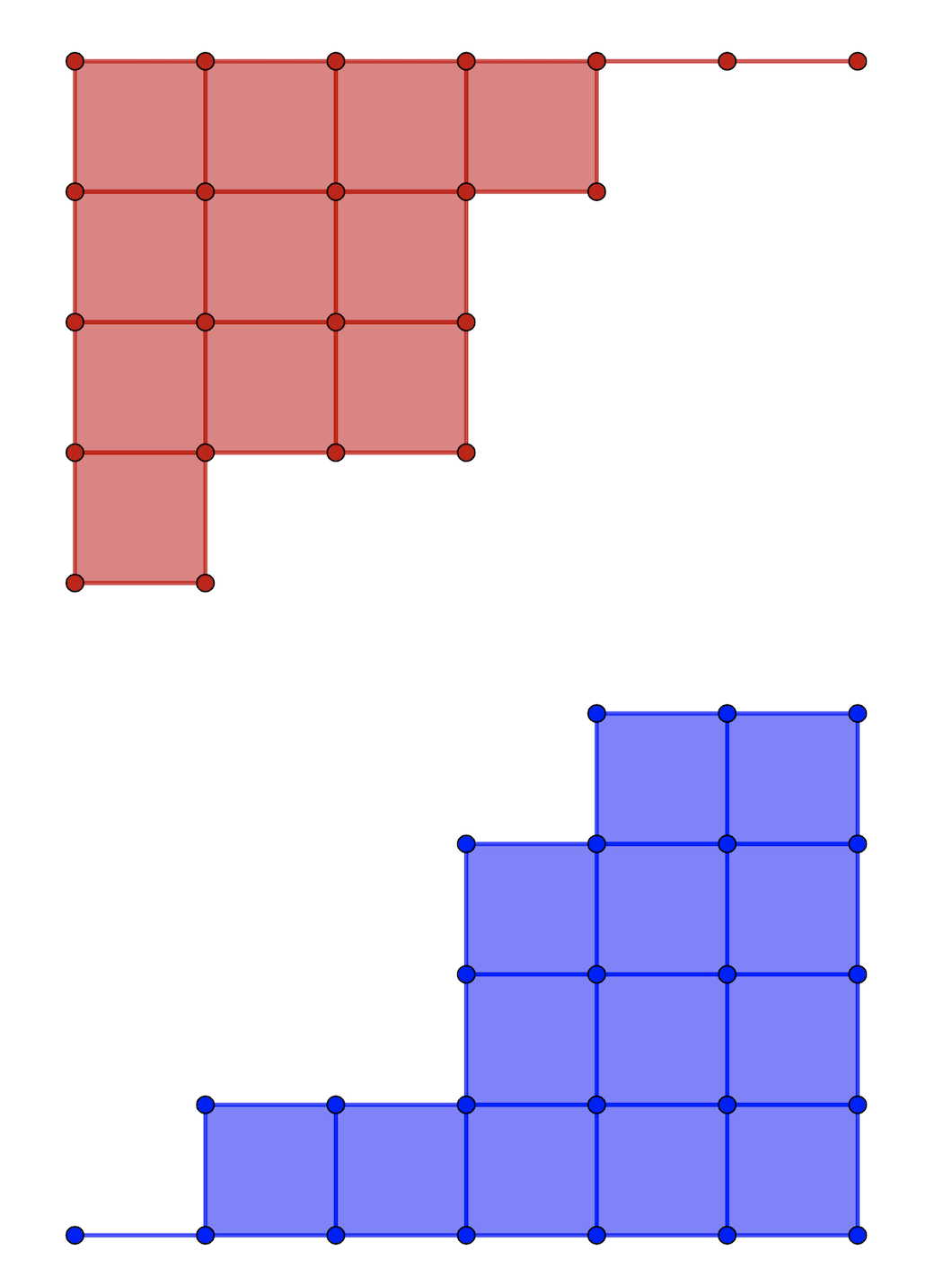}
\end{minipage}
\begin{minipage}{.05\textwidth}
  \centering
  \includegraphics[scale = 0.2]{Screenshot2.png}
\end{minipage}
\begin{minipage}{.20\textwidth}
  \centering
 \includegraphics[scale = 0.237]{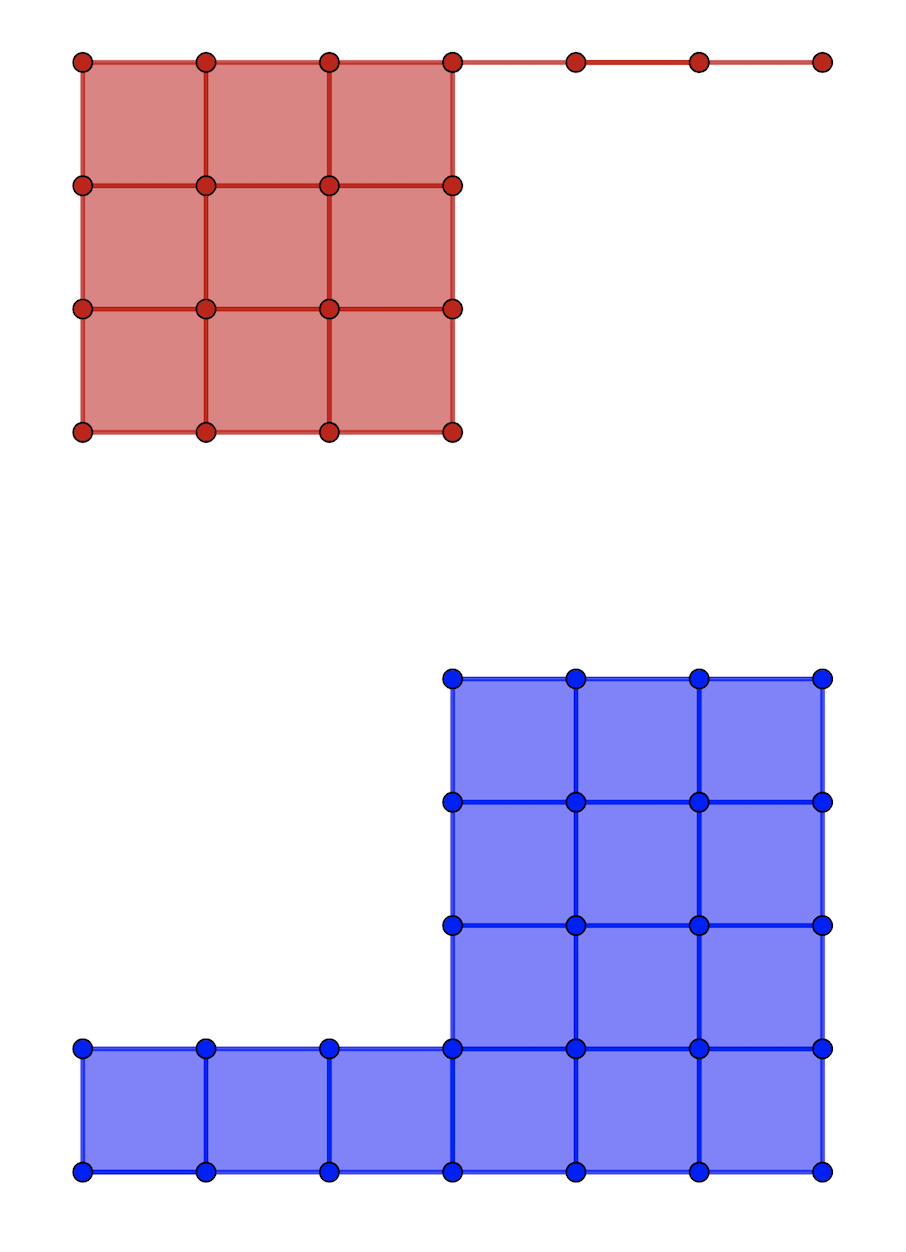}
\end{minipage}
\begin{minipage}{.05\textwidth}
  \centering
  \includegraphics[scale = 0.2]{Screenshot2.png}
\end{minipage}
\begin{minipage}{.20\textwidth}
  \centering
 \includegraphics[scale = 0.245]{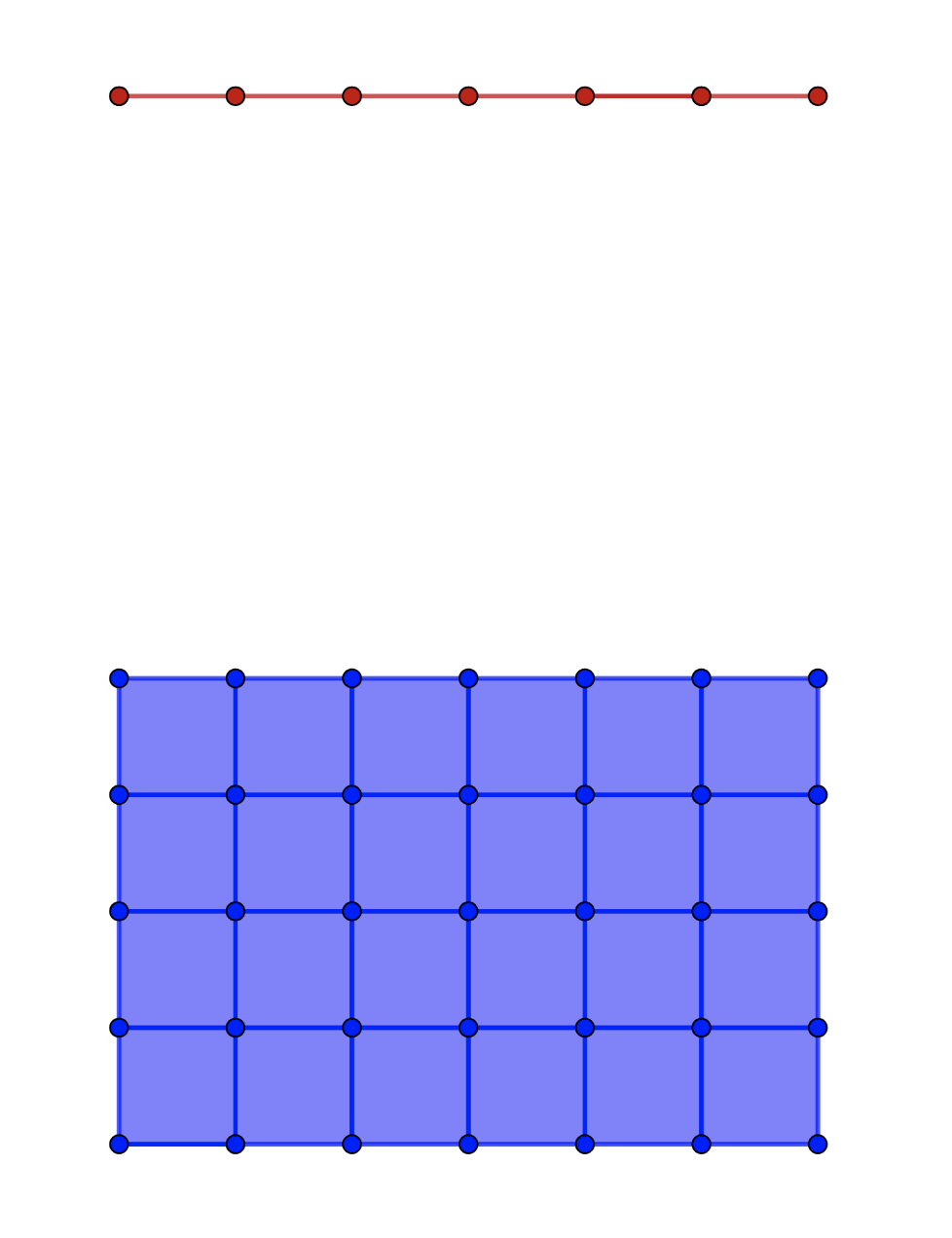}
\end{minipage}

\caption{Example of transforming a poly-line with $k<m$ into the standart representative}
\end{figure}







\begin{figure}[h]
\begin{minipage}{.49\textwidth}
  \centering
  \includegraphics[scale = 0.3]{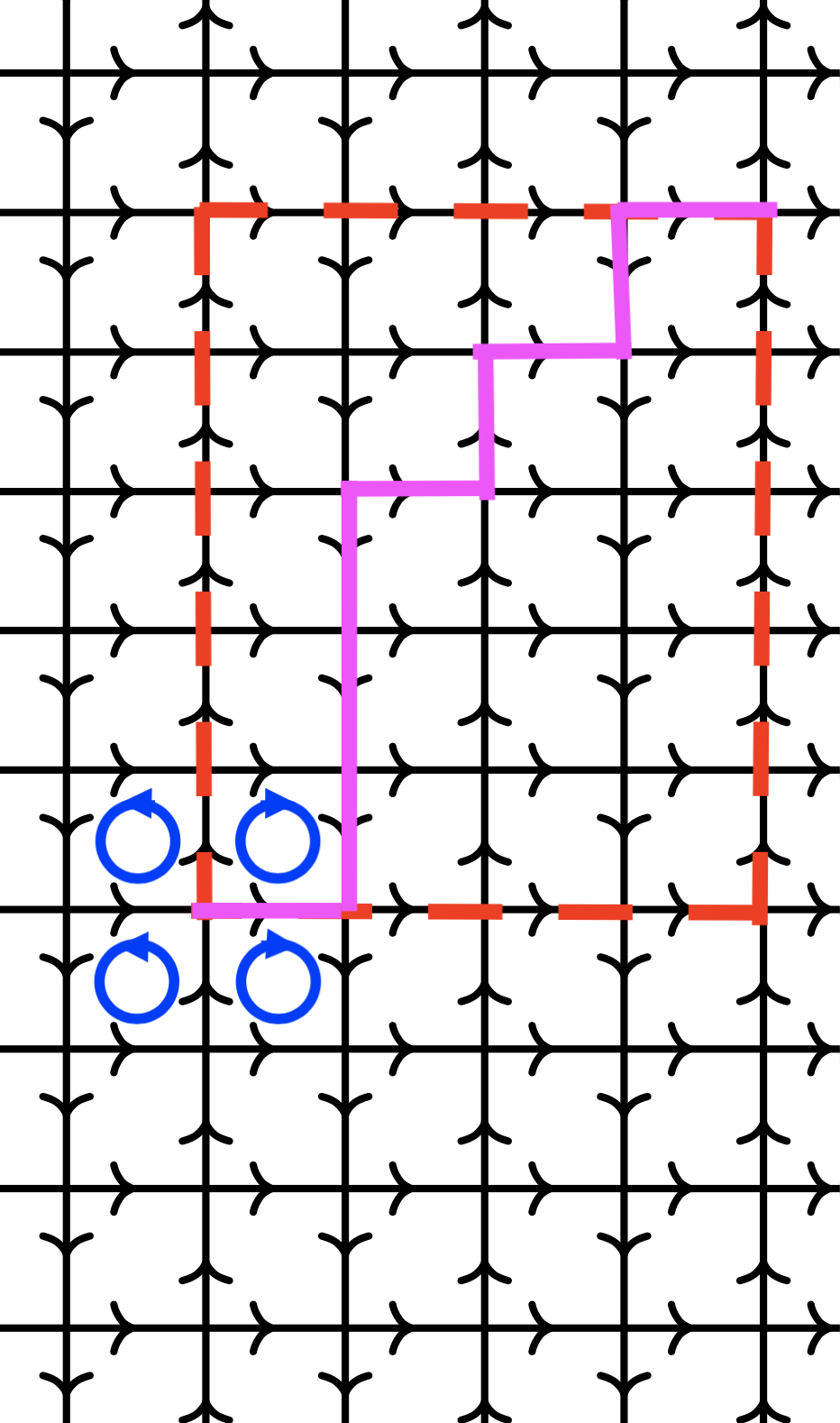}
\end{minipage}
\begin{minipage}{.49\textwidth}
  \centering
 \includegraphics[scale = 0.3]{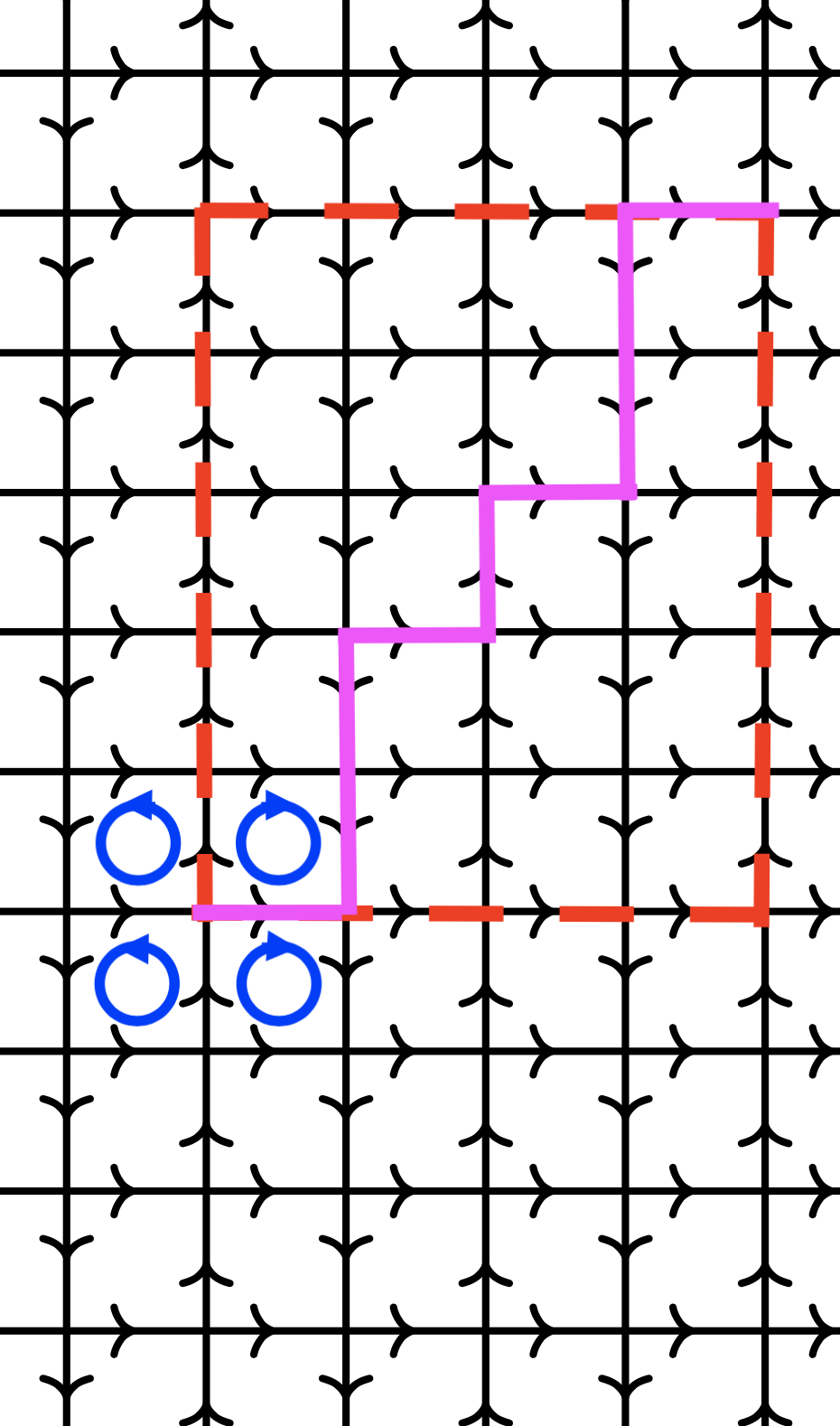}
\end{minipage}
\caption{Clipping for geodesic representative for $(-4,5,4)$}
\end{figure}

\end{enumerate}

\end{Definition}





\begin{figure}[h]
\center{\includegraphics[scale=0.2]{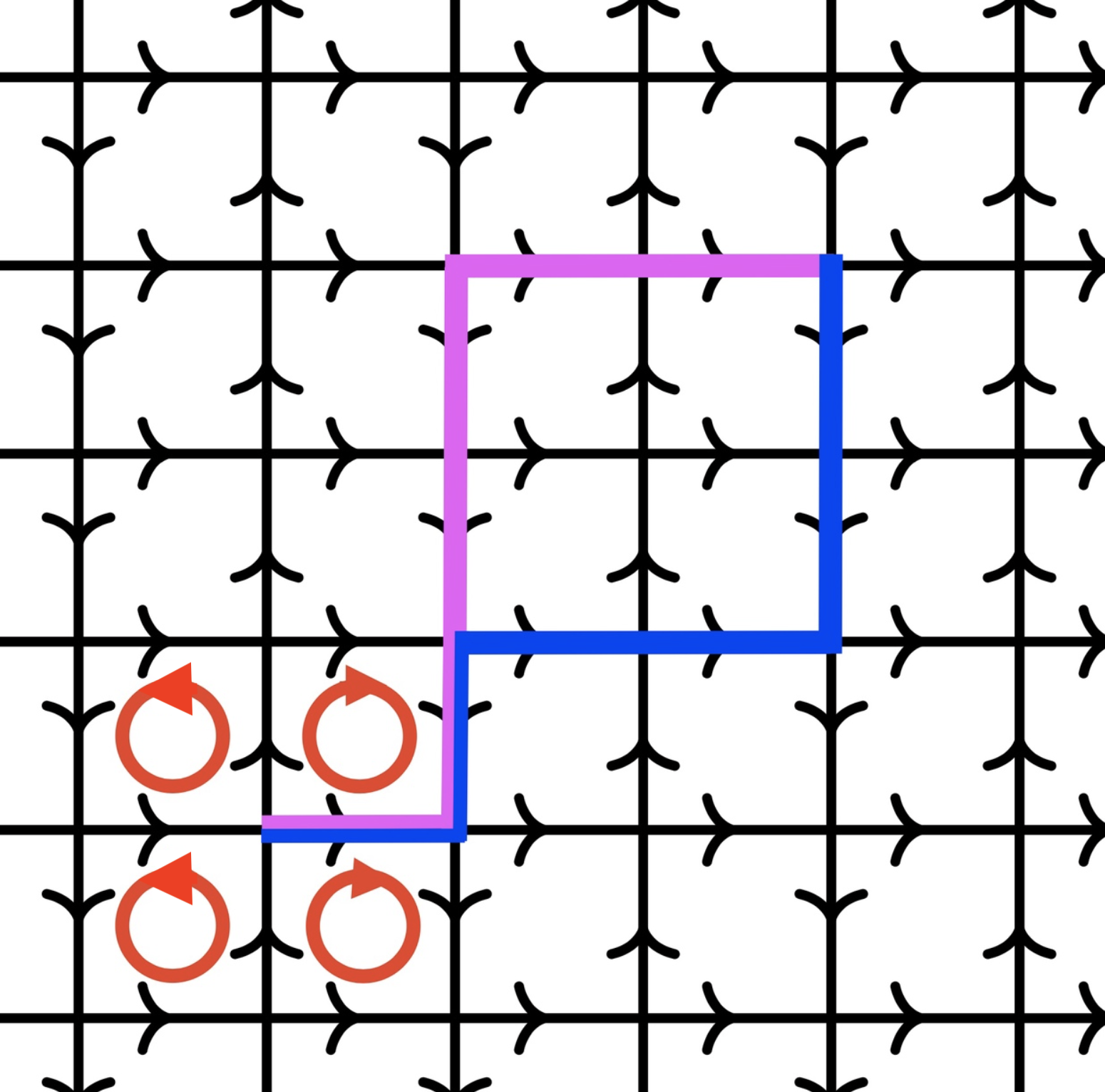}}
\caption{Even castling for word $ab^{-1}a^2b^{-2} = ab^{-3}a^2$}
\label{fig:image}
\end{figure}

\begin{figure}[h]
\begin{minipage}{.49\textwidth}
  \centering
  \includegraphics[scale = 0.2]{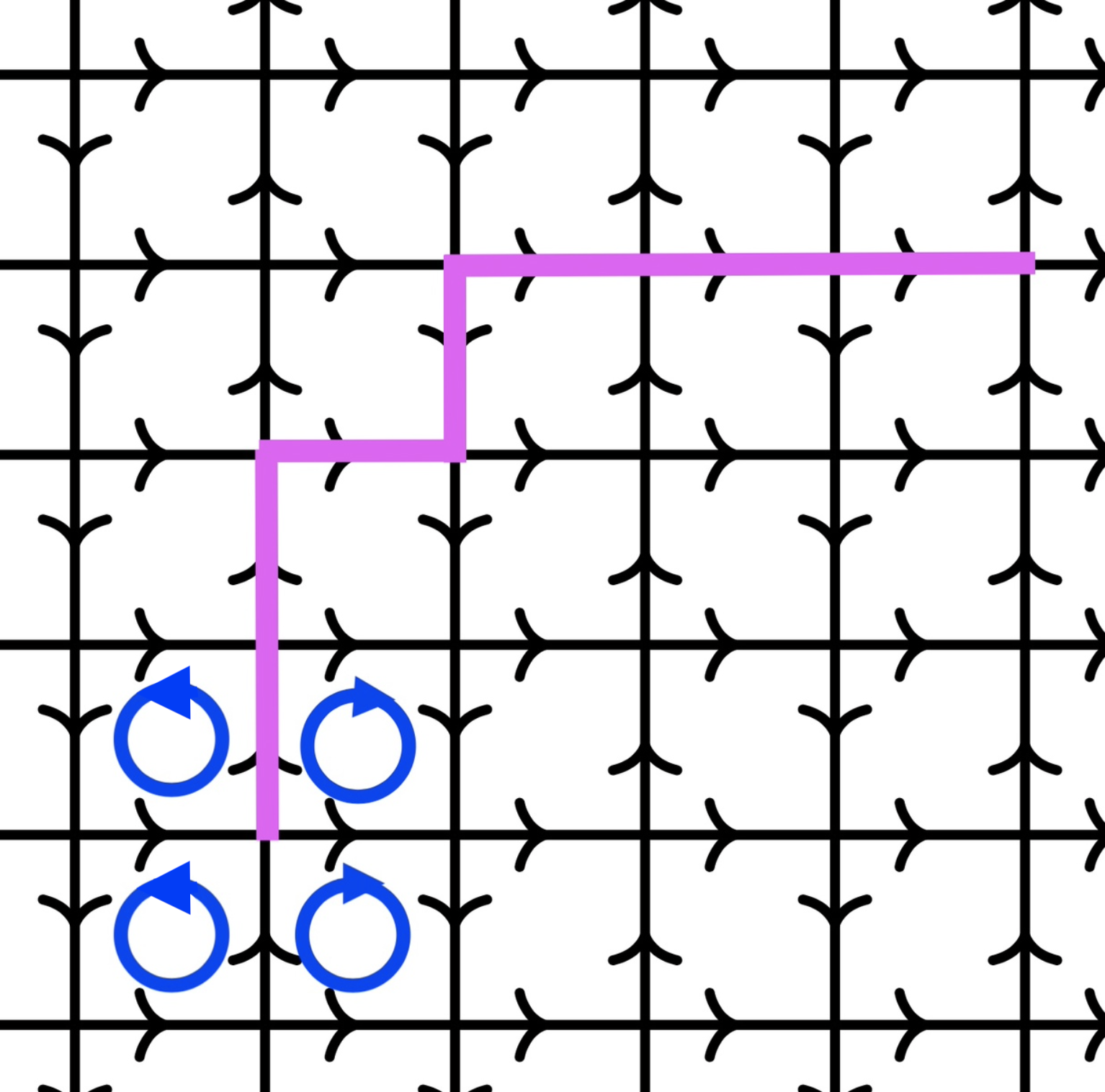}
\end{minipage}
\begin{minipage}{.49\textwidth}
  \centering
 \includegraphics[scale = 0.2]{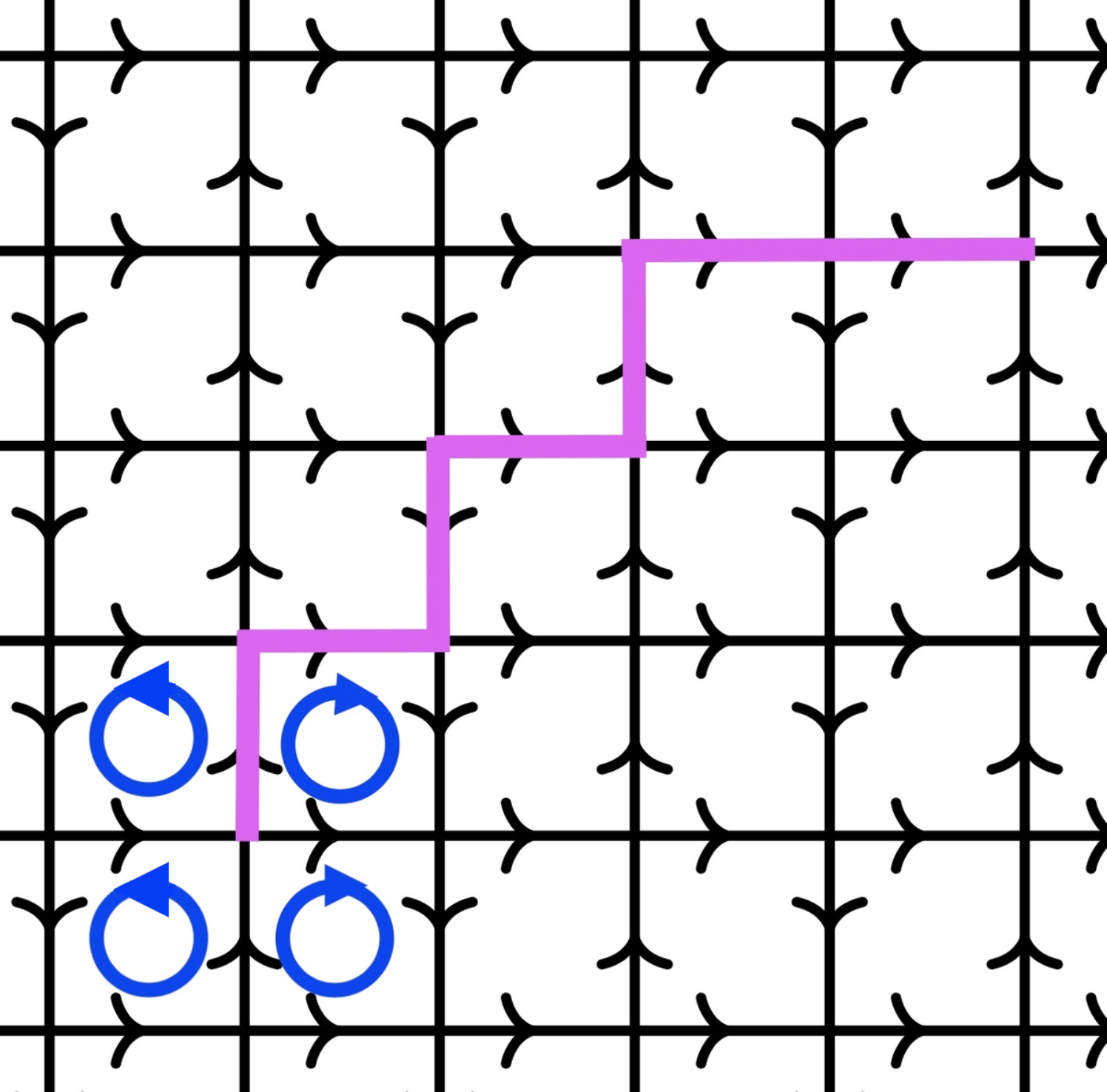}
\end{minipage}
\caption{Clipping for two geodesic representatives for $(-1,3,4)$}
\end{figure}

\begin{figure}[h]
\begin{minipage}{.49\textwidth}
  \centering
  \includegraphics[scale = 0.2]{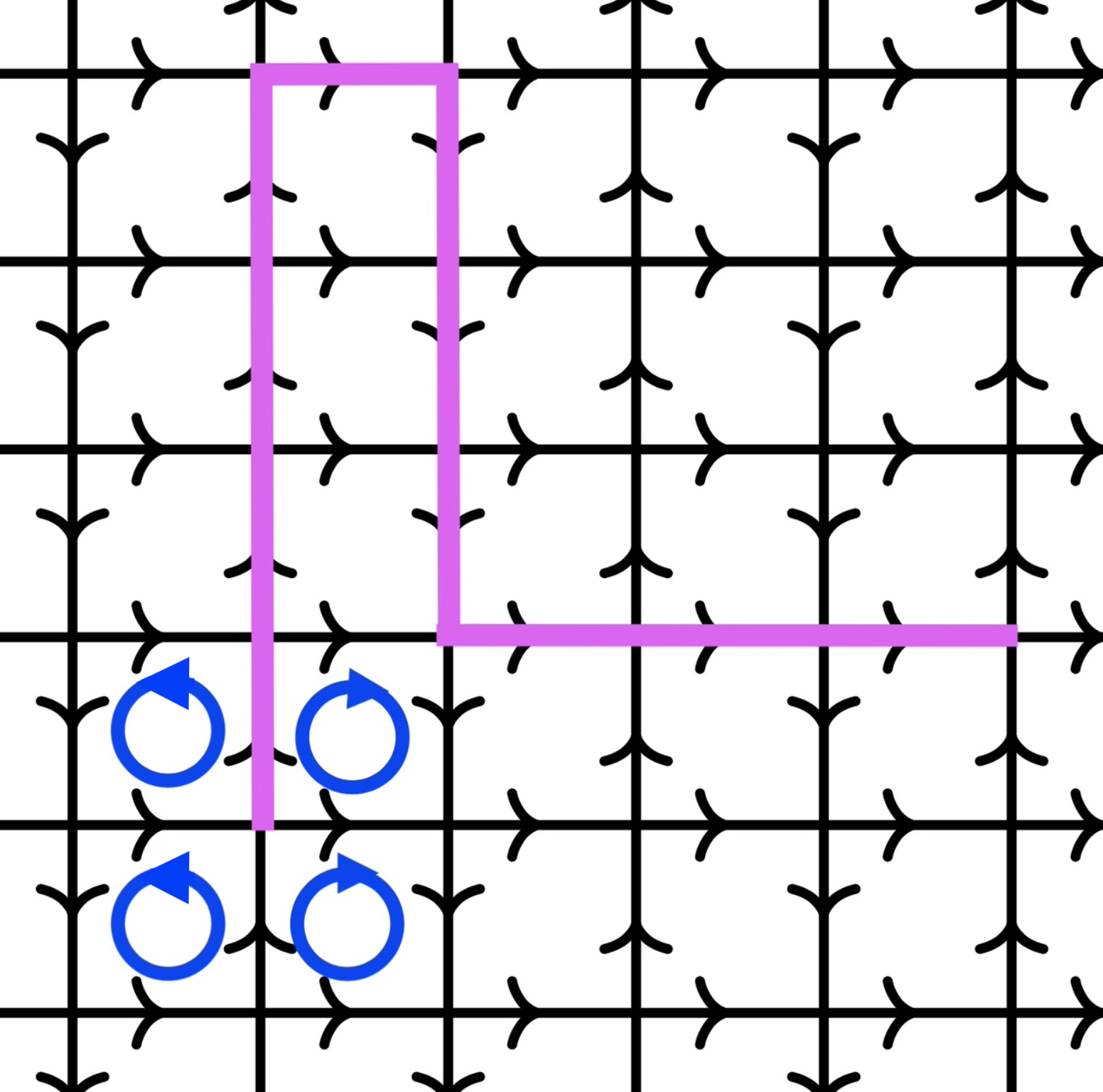}
\end{minipage}
\begin{minipage}{.49\textwidth}
  \centering
 \includegraphics[scale = 0.2]{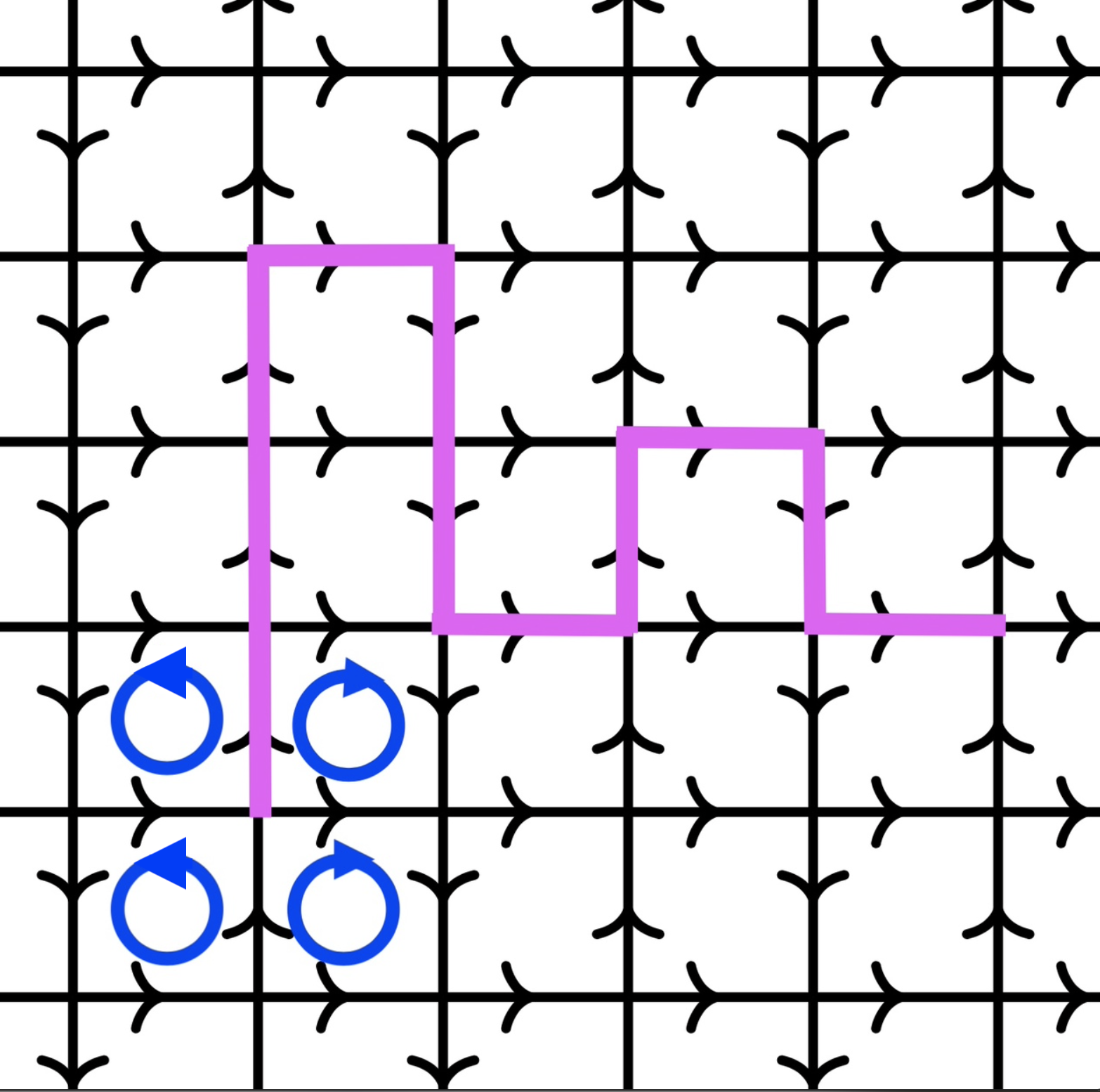}
\end{minipage}
\caption{Detowering for geodesic representatives of $(3,1,4)$}
\end{figure}



\begin{Lemma}
Two different polygonal lines have different sets of Young Diagrams which are obtained by applying basic moves. 
\end{Lemma}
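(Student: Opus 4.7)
The plan is to prove the lemma by establishing that the ordered set of Young Diagrams is a complete invariant of the polygonal line inscribed in the bounding rectangle $ABCD$ associated with the class $(k,m,n)$. In other words, I want to show that the assignment sending a polyline $w$ to its ordered collection of complementary regions in $ABCD$ is injective; the contrapositive of this injectivity is exactly what the lemma claims.

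First I would fix a polyline $w$ from $(0,0)$ to the endpoint determined by $(m,n)$, inscribed in the rectangle $ABCD$ chosen as in Definition~\ref{kok}. Since all edges of $w$ are horizontal or vertical unit segments of the integer lattice and $w$ stays inside $ABCD$, the complement $ABCD \setminus w$ decomposes into finitely many open connected components, each a union of unit cells. I would verify cell-by-cell that each such component is a Young Diagram in the sense used in the definition of clipping (with the corner prescribed by which side of $ABCD$ the region touches), so the ordered set of Young Diagrams attached to $w$ is exactly the data of these components listed in the order in which $w$ passes them.

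The key reconstruction step is then to recover the edge-set of $w$ from this data: an interior unit edge of $ABCD$ lies on $w$ if and only if the two unit cells it separates belong to different components of the complement, and a boundary edge lies on $w$ only if it borders a component on one side and lies on the frame $ABCD$ on the other in a way consistent with the prescribed starting and ending points. Once the edge-set is known, the polyline is reconstructed uniquely by starting at $(0,0)$ and following edges consistently with the orientation toward $(m,n)$, which determines each successive letter of the word. Hence two polylines sharing the same ordered set of Young Diagrams must coincide.

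The main obstacle I expect is bookkeeping across the different sign-cases in the definition of clipping: in each of the five cases the rectangle $ABCD$ and the corner of each Young Diagram are placed differently, and one must check uniformly that the local orientation data built into the ordered set suffices to distinguish, for instance, a cell lying above the polyline from one lying below it. I would handle this by observing that a single sweep from $(0,0)$ to the endpoint visits the Young Diagrams in an alternating top-bottom (or left-right) pattern dictated by the ordering, so the side information is implicit in the ordering and no extra data is needed. Granted this, the injectivity follows and the lemma is proved.
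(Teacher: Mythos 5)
Your proposal takes essentially the same route as the paper: both arguments reduce the lemma to the injectivity of the map sending a polyline to its ordered collection of complementary Young Diagrams in the rectangle $ABCD$, established by reconstructing the polyline's edge-set from that data. Your version merely spells out the reconstruction step (which edges of the lattice must belong to $w$) that the paper's proof asserts in one line, so it is a more detailed rendering of the same idea rather than a different proof.
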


\begin{proof}


It's obvious that these squares must be the same width. Then we will prove that since the polygonal lines are the same, then the sets of Young Diagrams are also the same. Suppose they are not the same. Then somewhere there should be an additional/missing square. But if there is then we can reconstruct the polylines from the diagrams and they wouldn't be the same. The proof in the other way is analogical.
\end{proof}


\begin{Lemma}
All basic moves don't change the element that is represented by the geodesic word and don't generate non-geodesic representatives by applying them.
\end{Lemma}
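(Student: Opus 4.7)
The lemma has two parts for each of the three basic moves: (i) the move preserves the element of $cK$, and (ii) it preserves word length. Together these force geodesicity: if $w$ is geodesic and $w'$ results from a basic move then $|w'|=|w|=l([w])=l([w'])$, so $w'$ is again geodesic. Length preservation is immediate from Definition \ref{kok} for each of the three moves—even castling permutes two adjacent blocks of equal total size, detowering removes two letters from one subword and inserts two into another, and clipping is assembled from elementary $ab\leftrightarrow ba$-type swaps. So the real content is (i).

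For even castling, the identity $b^{\pm}a^{2}=a^{2}b^{\pm}$ in $cK$ is a direct consequence of Lemma 1, since $a^{2}$ is central. For detowering, I would verify the simultaneous pair of substitutions directly via the normal-form multiplication formula of Section 4: the replacement $b^{x}ab^{-x}\to b^{x-1}ab^{x-1}$ shifts the area coordinate $k$ by a definite amount, and the corresponding replacement of an isolated $a$ further along the word by $b^{\pm}ab^{\pm}$ contributes the opposite shift, the sign being forced by the parity of the number of $a$-letters between the two loci. Combining them gives zero net change in $k$, while $(m,n)$ is unchanged since the endpoint of the polyline does not move.

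Clipping will be the main obstacle. By construction, the polyline of $w$ is encoded—together with its bounding rectangle $ABCD$—by an ordered pair of Young diagrams occupying the complement of the polyline in the rectangle. An elementary clipping adds two unit squares to one diagram and removes two unit squares from the other, with the two altered squares at odd horizontal distance. On the word side, each single addition or removal of a unit square next to the polyline corresponds to a local substitution that shifts $k$ by $\pm 1$; the odd-distance condition is precisely what Lemma \ref{cut} needs in order to guarantee that the $\pm 1$ contributions from the two altered squares cancel (a $2\times 1$ rectangle bounds zero area in the model). Hence $k$ is preserved, and the endpoint $(m,n)$ is preserved since $ABCD$ is held fixed. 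The subtle step is translating between the global Young-diagram description in Definition \ref{kok} and the individual word-level swaps; this is where the bulk of the technical work lies.
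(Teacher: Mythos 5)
Your argument follows the same route as the paper's own (very terse) proof: length preservation is read off directly from the definition of each move, even castling preserves the element because $a^2$ is central, and for detowering and clipping the two local modifications contribute opposite shifts to the area coordinate $k$ which cancel by the parity/odd-distance condition (Lemma \ref{cut}), while the endpoint $(m,n)$ is untouched. Your write-up is in fact somewhat more explicit than the paper's, and the "subtle step" you flag for clipping (passing from the Young-diagram description to word-level swaps) is left equally implicit in the original.
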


\begin{proof}

The first and second moves obviously don't change the length, area, and end-point of words because of lemmas in the previous subsection. The third move doesn't change area, because we add or delete such cells that the sum of their areas is zero (because of the distance). The length doesn't change in that case, because all such additions and deletions are realized as reordering pairs of letters with changing of sign in the case without creating subwords like $uu^{-1}$.

\end{proof}

The following Theorem is the purpose, why basic moves were introduced.

\begin{Theorem}\label{ll}
Any geodesic representative of element $(k,m,n)$ can be obtained from a standard geodesic representative by finitely many applying basic moves.
\end{Theorem}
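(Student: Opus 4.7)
The plan is to induct on a complexity measure that quantifies how far a given geodesic representative $w$ of $(k,m,n)$ lies from the standard representative $w_{\text{std}} = b^{k+m}ab^k a^{n-1}$. First I would project both polylines into the Cayley graph of $\pi_1(K)$; since they represent the same element, the symmetric difference of the regions they enclose, referenced against the bounding rectangle $ABCD$ of Definition \ref{kok}, decomposes into an ordered family of Young diagrams. The natural complexity to use is the total number of unit cells in this symmetric difference, augmented by the heights of any spikes that stick out of $ABCD$.

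Next I would handle the spikes by detowering. Any portion of $w$ that exits the bounding rectangle must return to it, producing a subword of the form $b^{\pm x} a b^{\mp x}$ paired with an $a$-edge elsewhere in the word. By geodesicity and Lemma \ref{cut}, such a tower encloses zero net area, so it may be straightened by repeated detowering moves without changing length, endpoint, or element. After finitely many detowerings, $w$ lies entirely inside $ABCD$.

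Once $w$ is confined to $ABCD$, I would run the clipping induction. If $w \neq w_{\text{std}}$, the ordered list of Young diagrams encoding $w$ differs from that of $w_{\text{std}}$, so one can locate two unit cells of opposite area-sign lying at odd horizontal distance whose simultaneous cancellation preserves the Young-diagram shapes. The associated clipping swaps adjacent letter pairs such as $ab \mapsto ba$, leaving length, area, and endpoint unchanged by the preceding lemma. Even castling plays the role of a preparatory move that realigns $a^2$-blocks with the $b^{\pm}$-segments so that the next pair of cancelling cells becomes accessible to a clipping. Each clipping or castling strictly decreases the symmetric-difference complexity, so the procedure terminates at $w_{\text{std}}$; since every basic move is reversible, this yields the required chain of moves from $w_{\text{std}}$ to $w$.

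The main obstacle I foresee is verifying that the three basic moves form a \emph{complete} system: proving that whenever $w \neq w_{\text{std}}$ inside $ABCD$ one can always locate a valid clipping, possibly after finitely many preparatory even castlings. This is essentially a parity statement about Young diagrams in $ABCD$, and confirming it requires a case analysis by the sign of $k$ and the parity of $n$, matching the five configurations listed in Definition \ref{kok}, to guarantee that the cells of the symmetric difference always come in the required cancelling pairs at odd distance.
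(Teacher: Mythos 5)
Your proposal follows essentially the same route as the paper's own proof: encode each geodesic by its ordered Young-diagram decomposition inside the rectangle of Definition \ref{kok}, use detowering to deal with the parts of the polyline outside the rectangle (equivalently, a mismatch in the number of diagrams), and use clipping together with even castling to transform the standard representative's diagrams into the arbitrary one's. Your write-up is in fact more detailed than the paper's three-sentence sketch, and the completeness gap you flag --- that a valid clipping (after finitely many preparatory castlings) can always be located whenever $w \neq w_{\mathrm{std}}$ --- is likewise left unaddressed in the paper, which disposes of it with ``it's easy to understand that any Young Diagram can be obtained with 2 basic moves.''
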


\begin{proof}
Any geodesic word is uniquely presented by the union of Young Diagrams and rectangle (as in \ref{kok}, third basic move). We fix the rectangle for the standard representative and will transform it into the arbitrary geodesic representative. It's easy to understand that any Young Diagram can be obtained with 2 basic moves( clipping and even castlings). If we come up with a situation in that our geodesic representative has a different amount of Young Diagrams we use detowering to transform it to the standard representative.
\end{proof}







\newpage

\begin{figure}[h]
\begin{minipage}{.40\textwidth}
  \centering
  \includegraphics[scale = 0.4]{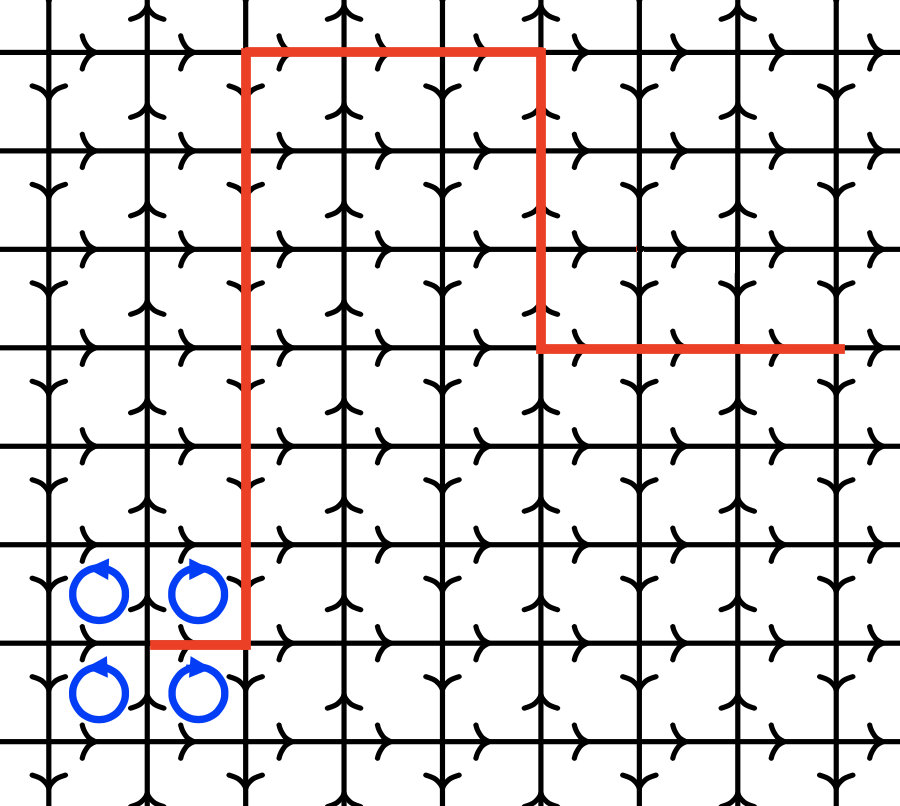}
  \end{minipage}
  \begin{minipage}{.05\textwidth}
  \centering
  \includegraphics[scale = 0.2]{Screenshot2.png}
\end{minipage}
  \begin{minipage}{.40\textwidth}
  \centering
  \includegraphics[scale = 0.4]{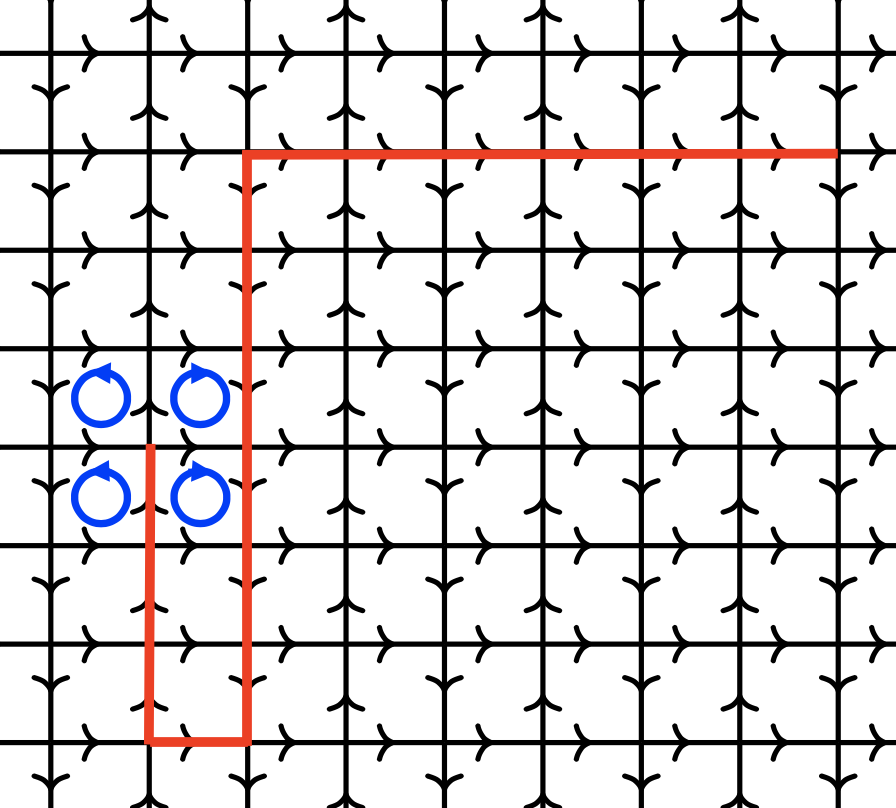}
  \end{minipage}
 \end{figure}

\newpage

\begin{figure}[h]
\begin{minipage}{.20\textwidth}
  \centering
  \includegraphics[scale = 0.5]{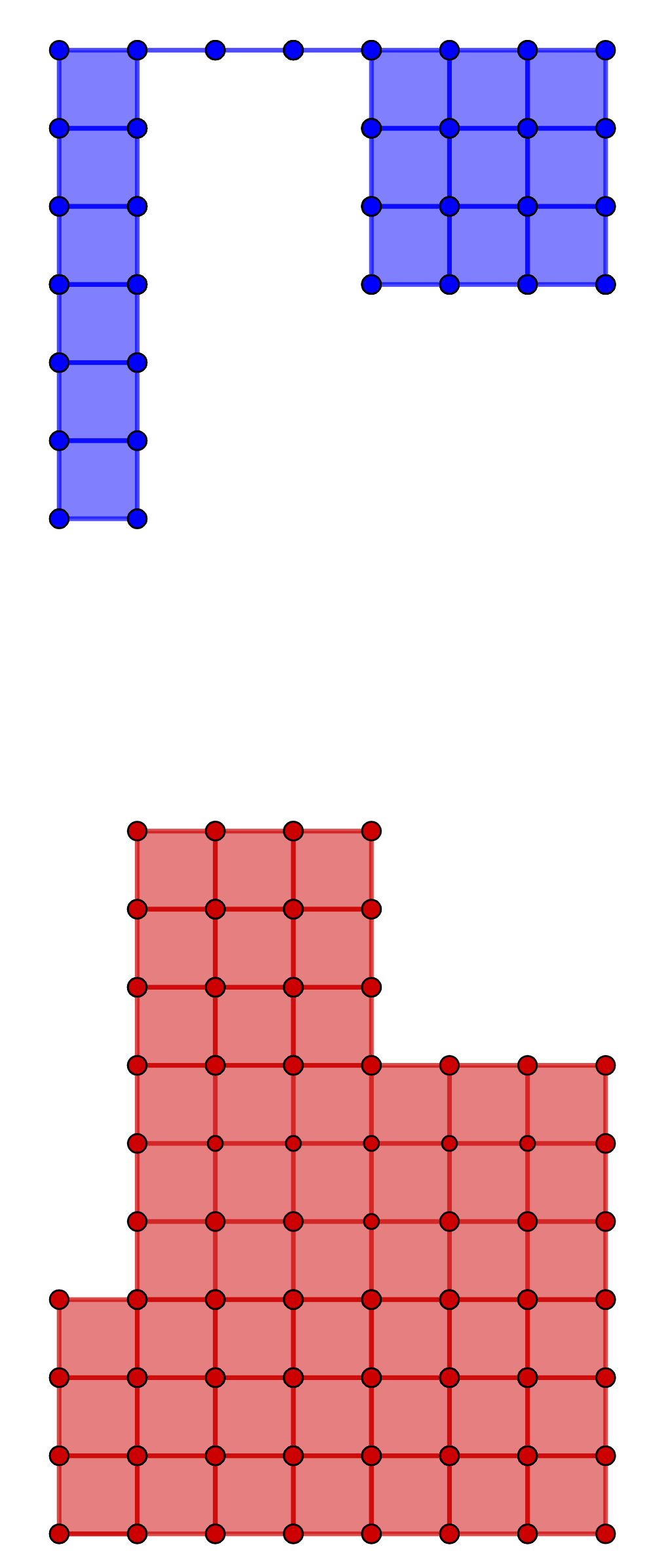}
\end{minipage}
\begin{minipage}{.05\textwidth}
  \centering
  \includegraphics[scale = 0.2]{Screenshot2.png}
\end{minipage}
\begin{minipage}{.20\textwidth}
  \centering
 \includegraphics[scale = 0.5]{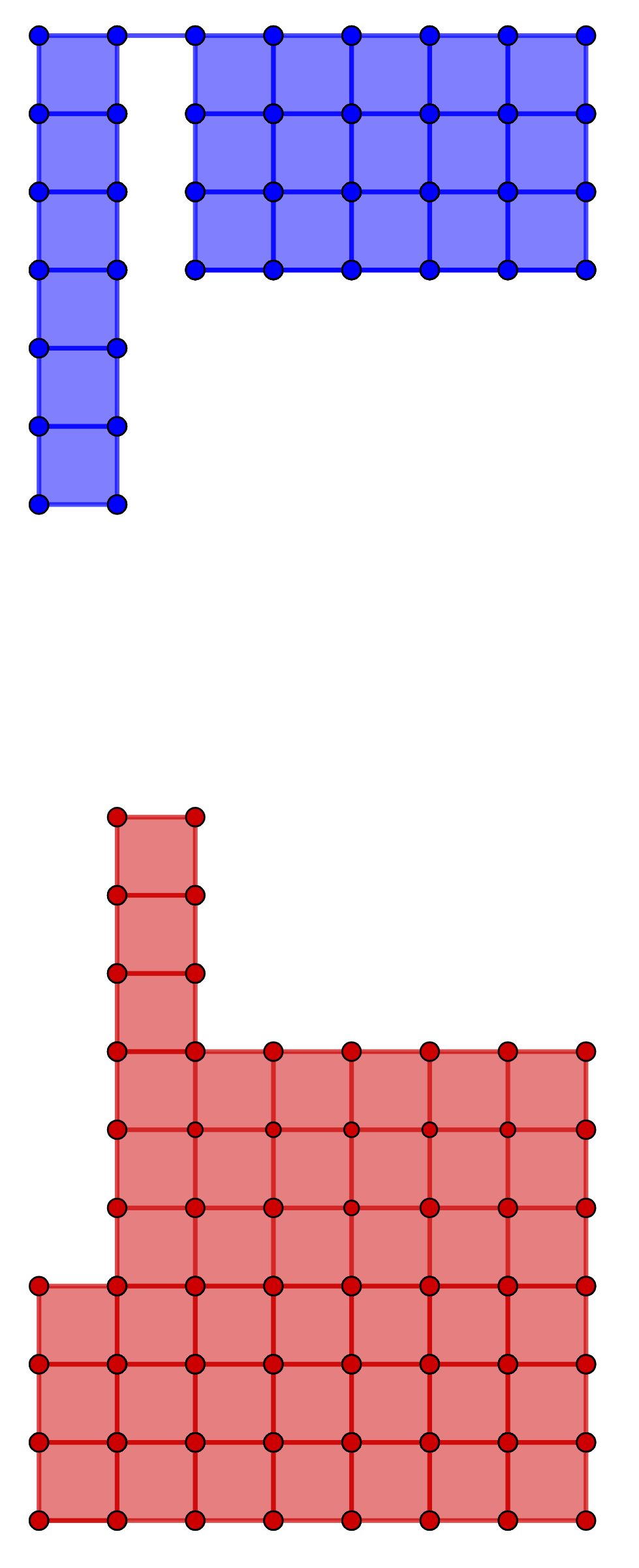}
\end{minipage}
\begin{minipage}{.05\textwidth}
  \centering
  \includegraphics[scale = 0.2]{Screenshot2.png}
\end{minipage}
\begin{minipage}{.20\textwidth}
  \centering
 \includegraphics[scale = 0.5]{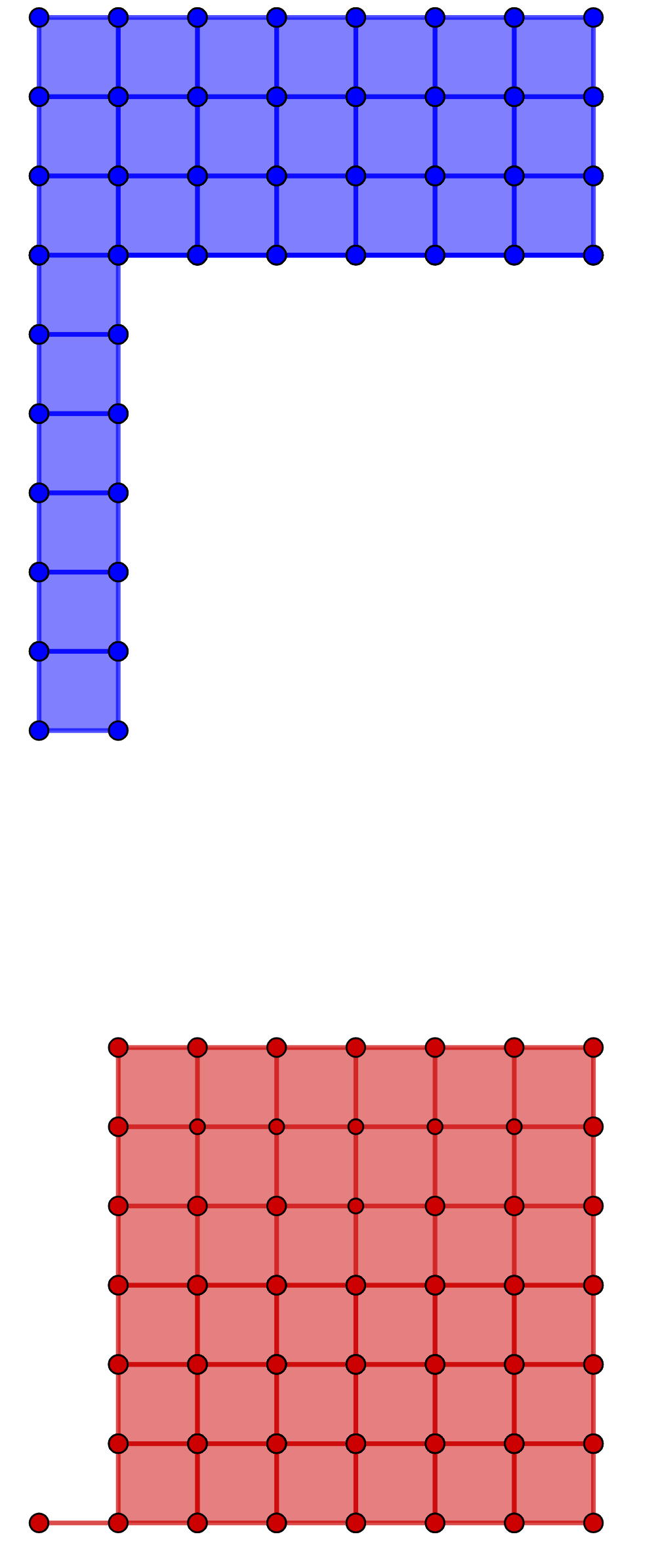}
\end{minipage}
\caption{Example of transforming a poly-line with $k>m$ into the standart representative}
\end{figure}

\vspace{3cm}

\end{document}